\tikzstyle{vertex}=[draw,circle,fill=black!5,minimum size=25pt,inner sep=0pt]
\tikzstyle{edge} = [draw,thick,<->]
\tikzstyle{weight} = [font=\large]
\newcommand{\R}{\mathbb{R}}
\newcommand{\N}{\mathbb{N}}
\newcommand\diag{\textrm{\normalfont diag}\,}
\newcommand\Real{\textrm{\normalfont Re}}
\newcommand\dive{\textrm{\normalfont div}\,}
\newcommand\linspan{\textrm{\normalfont span}\,}
\newcommand\dt{\textrm{\normalfont dt}}
\newcommand\dx{\textrm{\normalfont dx}}
\newtheorem{theorem}{Theorem}[section]
\newtheorem{proposition}[theorem]{Proposition}
\newtheorem{lemma}[theorem]{Lemma}
\newtheorem{corollary}[theorem]{Corollary}
\newtheorem{example}[theorem]{Example}
\begin{document}
\baselineskip=16pt

\thispagestyle{empty}

\begin{center}\sf
{\Large Velocity-jump processes with a finite number of speeds}\vskip.2cm
{\Large and their asymptotically parabolic nature}\vskip.25cm
{\tt \today}\vskip.2cm
Corrado MASCIA\footnote{Dipartimento di
  Matematica ``G. Castelnuovo'', Sapienza -- Universit\`a di Roma, P.le Aldo
  Moro, 2 - 00185 Roma (ITALY), \texttt{\tiny mascia@mat.uniroma1.it}
  {\sc and} {Istituto per le Applicazioni del Calcolo, Consiglio Nazionale delle Ricerche
  (associated in the framework of the program ``Intracellular Signalling'')}}
\end{center}
\vskip.5cm


\begin{quote}\footnotesize\baselineskip 12pt 
{\sf Abstract.} 
The paper examines a class of first order linear hyperbolic systems, proposed as a generalization
of the Goldstein--Kac model for velocity-jump processes and determined by a finite number of
speeds and corresponding transition rates.
It is shown that the large-time behavior is described by a corresponding scalar diffusive equation
of parabolic type, defined by a diffusion matrix for which an explicit formula is given.
Such representation takes advantage of a variant of the Kirchoff's matrix tree Theorem applied to the 
graph associated to the system and given by considering the velocities as verteces and the
transition rates as weights of the arcs.
\vskip.15cm

{\sf Keywords.}
Velocity-jump process, Hyperbolic diffusion, Graph theory.
\vskip.15cm

{\sf 2010 AMS subject classifications.} 
35L45 (05C05, 82C21)
\end{quote}

\section{Introduction}\label{sect:intro}

The {\bf Goldstein--Kac model} for correlated random walk (\cite{Gold51}, \cite{Kac74}) 
consists in a first order linear hyperbolic system for the couple $(f,g)=(f,g)(x,t)$ given by
\begin{equation*}
	\left\{\begin{aligned}
		&\frac{\partial f}{\partial t}-\nu\,\frac{\partial f}{\partial x}=-\mu f + \mu g\\
		&\frac{\partial g}{\partial t}+\nu\,\frac{\partial g}{\partial x}=\mu f - \mu g
	\end{aligned}\right.
\end{equation*}
where $(x,t)\in\R\times(0,\infty)$ and $\nu, \mu$ are positive constants.
Variables $f$ and $g$ represent ``densities'' of individuals moving, respectively, 
toward the left and toward the right of a one-dimensional line with velocity $\nu$.
The linear term at the right-hand side describes the fact that reversal of speed
is possible with a transition rate $\mu$.
For such a reason, the model is considered a differential description of a {\bf velocity-jump process}.
The Goldstein--Kac model was originally motivated by G.I. Taylor \cite{Tayl22} in what 
M. Kac describes as ``an abortive, or at least not very successful, attempt to treat turbulent
diffusion.'', see \cite{Kac74}.
Even if perhaps unproductive in its original intent, the model gained a lot of attention because
of its quality of being located at the crossroads of amenability and significance.
The former emerges from its hyperbolic and linear structure, giving raise to well-posedness 
and preservation of smoothness, and its quasi-monotonicity, guaranteeing the validity of
a comparison principle (see \cite{NataHano96}).
``Significance'' stems mainly from the fact that the Goldstein--Kac is considered as
a prototype for a differential description of transport mechanisms and, because of its nature,
a paradigm for {\bf hyperbolic diffusion}.
Such fact is even more evident observing that the sum $u=f+g$ satisfies the
{\bf telegraph equation} (or {\bf damped wave equation} in one space dimension),
\begin{equation*}
	2\mu\frac{\partial u}{\partial t}+\frac{\partial^2 u}{\partial t^2}
		-\nu^2 \frac{\partial^2 u}{\partial x^2}=0,
\end{equation*} 
often believed as a diffusion model with finite propagation speed, alternative to the traditional
parabolic heat equation, which, on the contrary, mantains the inherent paradox of infinite speed
of spreading (among others, see the interesting contribution \cite{Kell04}).

The aim of this article is to analyze a class of first order linear hyperbolic systems in several
space dimensions extending the Goldstein--Kac model, and still preserving the basic target of
describing a process where individuals may change velocity of propagation at a given rate. 
The main topic under investigation is to show how and at which extent the corresponding
hyperbolic diffusion mechanism is related to a corresponding parabolic one.
To enter the details, let $\R^d$ be the ambient space for the space variable $\mathbf{x}$
and let a fixed finite set of speeds $\{\mathbf{v}^1,\dots,\mathbf{v}^n\}\subset\R^d$ be given.
Finally, in order to describe the possibility of changing from one velocity to the other,
let $\mu_{ij}$, for $i,j\in\{1,\dots,n\}$ and $i\neq j$, be a set of non-negative constants measuring 
the rate of transition from speed $\mathbf{v}^i$ to speed $\mathbf{v}^j$.
Then, we consider the system for the unknown $\mathbf{f}=(f_1,\dots,f_n)$ given by
\begin{equation}\label{discrkin0}
  	\frac{\partial f_i}{\partial t}+\mathbf{v}^i\cdot\nabla_{\mathbf{x}} f_i
  		+\sum_{j\neq i}\bigl(\mu_{ij}\,f_i-\mu_{ji}\,f_j\bigr)=0, \qquad i=1,\dots,n,
\end{equation}
The Goldstein--Kac model corresponds to the one-dimensional case with two speeds,
namely $d=1$, $n=2$, $\{\mathbf{v}^1=-\nu,\mathbf{v}^2=+\nu\}\subset\R^1$, and with $\mu_{12}=\mu_{21}=\mu>0$.
Model \eqref{discrkin0} can also be regarded as a discrete velocity version of the model
treated in \cite{HillOthm00} or as a linearization of a discrete velocity Boltzmann system.
In this latter connection, system \eqref{discrkin0} fits into the class considered in \cite{ShizKawa85}.

Similarly to the destiny of the Goldstein--Kac model, system \eqref{discrkin0} can be considered
as a backbone for more complicated models taking into account the dependencies on
external signals, as in the case of chemotaxis (see \cite{HillStev00, GuMaNaRi09}),
or the presence of reaction/reproduction mechanisms (see \cite{Hade94, LaMaPlSi13}).
Endorsing the interest of such kind of generalizations, one-dimensional versions
of \eqref{discrkin0} has already been considered in \cite{FrieCrac06, FrieHuKeen13}
in the description of transport along axons.

In what follows, the model \eqref{discrkin0} is examined under the assumption that the 
transition rates are symmetric, $\mu_{ij}=\mu_{ji}$, the meaning of which in term of the 
velocity-jump process is immediate.
The non-symmetric case is still very interesting and natural in the modeling of phenomena
where the transport is guided by the gradient of some substance (as in the chemotaxis models)
and it is left for future investigation.
No specific restriction is made on the number and on the choice of the propagation
velocities $\mathbf{v}^i$.

In Section \ref{sect:velo}, the basic properties of \eqref{discrkin0} are discussed.
In particular, a brief description on how the model can be formally derived starting from
a correlated random walk is given, following the lines of what it is usually done for
the Goldstein--Kac model.
Then, it is showed that any convex function from $\R$ to $\R$ can be used to define
a Lyapunov functional for the system.
Such demanding structure implies that $L^p-$norm are preserved and that a form
of comparison principle holds.

Section \ref{sect:drift} can be considered the hub of the paper.
Applying the usual Laplace--Fourier transform, the hyperbolic system is converted into its
{\bf dispersion relation}, which encodes the different modes supported by the model.
In particular, the analysis of the dispersion relation close to the origin in the 
frequency space, is supposed to describe the large time behavior of the solutions.
A detailed analysis provides two crucial informations which are described in a somewhat
informal way here.
Comprehensive presentation and rigorous statements can be found in Section \ref{sect:drift}.

Firstly, under the assumption of symmetry of the transition rates, the large time behavior
of the variable $u=\sum_{i} f_i$ senses a drift speed $\mathbf{v}_{{}_{\textrm{drift}}}$
that is the arithmetic average of the speeds $\mathbf{v}^i$, viz.
\begin{equation*}
	\mathbf{v}_{{}_{\textrm{drift}}}=\frac{1}{n}\sum_{i=1}^n \mathbf{v}^i
\end{equation*}
(see Theorem \ref{thm:drift}).
In the non-symmetric case, the drift velocity does not coincide with the average
of the speeds, in general.

Secondly, in a frame moving with speed $\mathbf{v}_{{}_{\textrm{drift}}}$, the dynamics of the 
cumulative variable $u$ is related to the one of a linear constant-coefficient parabolic
diffusion equation determined by a real $d\times d$ matrix $\mathbb{D}$
\begin{equation}\label{parabeq}
	\frac{\partial w}{\partial t}=\textrm{\normalfont div}\left(\mathbb{D}\nabla_{\mathbf{x}} w\right)
\end{equation} 
The {\bf diffusion matrix} $\mathbb{D}$ is symmetric, non-negative definite;
additionally, it is possible to provide an explicit formula for the matrix $\mathbb{D}$, based on an
associated (undirected) graph, whose vertices are the velocities $\mathbf{v}^i$ and
whose arcs are weighted by the transition rates $\mu_{ij}$
(see Theorem \ref{thm:finaldiff}).
Such representation formula is based in a fundamental way on a variant of a well-known result
in graph theory, the so-called {\bf Kirchoff's matrix tree Theorem}.

Finally, Section \ref{sect:asym} is devoted to prove a rigorous result supporting the fact that
the parabolic equation \eqref{parabeq} gives the asymptotic description of \eqref{discrkin0}.
Specifically, chosen an initial datum $\mathbf{f}_0\in [L^1\cap L^2(\R^d)]^n$ for \eqref{discrkin0}
and denoted by $u$ the cumulative variable $\sum_i f_i$ and by $u_{{}_{\textrm{par}}}$
the solution to \eqref{parabeq} with initial datum $u_0(x):=\sum_{i} f_{0,i}$, there holds
\begin{equation*}
	|u-u_{{}_{\textrm{par}}}|_{{}_{L^2}}(t)
		\leq C\,t^{-\frac{1}{4}d-\frac{1}{2}}|{\mathbf{f}}_0|_{{}_{L^1\cap L^2}}
\end{equation*}
(see Theorem \ref{thm:asymptotic}).
The rate of the $L^2-$decay for \eqref{parabeq} with data in $L^1\cap L^2$ is $t^{-d/4}$,
thus the above estimate shows that the hyperbolic variable $u$ and its parabolic counterpart
$u_{{}_{\textrm{par}}}$ get closer one to the other in a time-scale shorter than the one of their
ultimate decay to zero.
This kind of estimate fits into a wide research stream exploring asymptotically parabolic nature of
hyperbolic equations, which dates back at least to J. Hadamard, \cite{Hada23}.
Additional bibliographical description on the subject is given in Section \ref{sect:asym}.

\section{Velocity-jump processes with a finite number of speeds}\label{sect:velo}

Let $d\geq 1$ and consider a family of $n$ velocities $\mathbf{v}^1,\dots,\mathbf{v}^n\in\R^d$,
with components $\mathbf{v}^i=(v_j^i)$, together with parameters $\mu_{ij}\geq 0$ for
$i,j=1,\dots, n$, $i\neq j$, describing the transition rate from speed $\mathbf{v}^i$ to
speed $\mathbf{v}^j$.
In all of the paper, the {\bf transition rates} $\mu_{ij}$ are assumed to be symmetric, i.e.
\begin{equation}\label{symmtrans}
	\mu_{ij}=\mu_{ji}\qquad\qquad\forall i\neq j.
\end{equation}
Given a population with total density $u=u(x,t)$, all the individuals are allowed
to move with one of the speeds $\mathbf{v}^1,\dots,\mathbf{v}^n\in\R^d$.
Denoted by $f_i=f_i(x,t)$ the density for the portion of the total population proceeding with velocity
$\mathbf{v}^i$ and assuming that the speed change is described by the rates $\mu_{ij}$, 
the dynamics is dictated by the first order linear system of hyperbolic type
\begin{equation}\label{discrkin}
  	\frac{\partial f_i}{\partial t}+\mathbf{v}^i\cdot\nabla_{\mathbf{x}} f_i
  		+\sum_{j\neq i}\bigl(\mu_{ij}\,f_i-\mu_{ji}\,f_j\bigr)=0, \qquad i=1,\dots,n,
\end{equation}
or, in vector form, with $\mathbf{f}=(f_1,\dots,f_n)$,
\begin{equation*}
 	 \frac{\partial \mathbf{f}}{\partial t}
	 	+\sum_{j=1}^{d} \mathbb{A}_j \frac{\partial \mathbf{f}}{\partial x_j}
  		+\mathbb{B}\mathbf{f}=0
\end{equation*}
where $\mathbb{A}_j=\diag(v_j^i)$ and $\mathbb{B}=(-\mu_{ij})$ with $\mu_{ii}:=-\sum\limits_{j\neq i} \mu_{ji}$.
If the coefficient $\mu_{ij}$ is zero for some $i,j$, then there is no direct transition from the speed
$\mathbf{v}^i$ to speed $\mathbf{v}^j$.

Matrix $\mathbb{B}$, referred to as the {\bf transition matrix}, is symmetric by assumption \eqref{symmtrans}
and singular since the sum of its columns is zero.
The total density $u=\sum f_i$ satisfies the homogeneous transport equation
\begin{equation*}
	\frac{\partial u}{\partial t}+\dive \mathbf{j}=0,
\end{equation*}
where the flux $\mathbf{j}$ is $\sum_{i} f_i\mathbf{v}^i$.

The Cauchy problem for \eqref{discrkin} determined by the initial condition
\begin{equation}\label{initialdatum}
 f_i(\mathbf{x},0)=f_{0,i}(\mathbf{x})\qquad\qquad \mathbf{x}\in\mathbb{R}^d,\quad i=1,\dots,n
\end{equation}
has a unique (mild) solution continuously dependent on the initial data whenever the initial datum
$\mathbf{f}_0=(f_{0,i})$ is chosen in an appropriate functional space.
Later on, we will concentrate on the case $f_0\in [L^1\cap L^2(\R^d)]^n$; for the moment,
we continue the discussion with choices of initial data depending case by case.

\subsection*{Derivation from a correlated random walk}
System \eqref{discrkin} can be heuristically derived from a correlated random walk,
in the same spirit of what is usually done for the Goldstein--Kac model.
Given the velocities $\{\mathbf{v}^1,\dots,\mathbf{v}^n\}\subset\R^d$ and the transition rates
$\mu_{ij}$, with $i,j\in\{1,\dots,n\}$, $i\neq j$, let $\dt>0$ be such that
\begin{equation*}
	p_i:=1-\sum_{j\neq i}\mu_{ij}\dt\geq 0\qquad\textrm{for any }i=1,\dots,n.
\end{equation*}
Then, let $X$ be the set of points in $\R^d$, defined by
\begin{equation*}
	X:=\Bigl\{\mathbf{x}=\sum_{i=1}^n c_i\,\mathbf{v}^i\dt\,:\,c_1,\dots,c_n\in\N\Bigr\}.
\end{equation*}
Assume that each particle of a given finite set is located at the initial time $t=0$ at some $\mathbf{x}\in X$
and it has a given state $i\in\{1,\dots,n\}$, corresponding to a ``preferential''
speed. The set of particles with state $i$ will be denoted by $F_i$.
At each time interval $\dt$, the displacement of every particle in $F_i$ amounts to $\dx=\mathbf{v}^i\dt$
with a probability $p_i$ and to $\dx=\mathbf{v}^j\dt$ with a probability $\mu_{ij}\dt$.
In the latter case, the particle changes state from $i$ to $j$.

Denoting by $f_i(\mathbf{x},n\,\dt)$ the fraction of particles with state $i$ that at time $t=n\,\dt$
are at position $\mathbf{x}$, the relation between the values $f_i$ at step $n$ and $n+1$ is
\begin{equation*}
	f_i(\mathbf{x},(n+1)\dt)=p_i f_i(\mathbf{x}-\mathbf{v}^i\dt,n\,dt)
		+\sum_{j\neq i} \mu_{ji} f_j(\mathbf{x}-\mathbf{v}^i\dt,n\,\dt)\dt
\end{equation*}
Adding and subtracting the term $f_i(\mathbf{x}+\mathbf{v}^i\dt,n)$, we obtain
\begin{equation*}
	\begin{aligned}
		&f_i(\mathbf{x}+\mathbf{v}^i\dt,(n+1)\dt)-f_i(\mathbf{x}+\mathbf{v}^i\dt,n\,\dt)
			+f_i(\mathbf{x}+\mathbf{v}^i\dt,n\,\dt)-f_i(\mathbf{x},n\,\dt)\\
		&\hskip5.5cm =\Bigl(\sum_{j\neq i}\mu_{ij}\Bigr) f_i(\mathbf{x},n\,\dt)\dt
			+\sum_{j\neq i} \mu_{ji} f_j(\mathbf{x},n\,\dt)\dt
	\end{aligned}
\end{equation*}
For time interval $\dt$ small, assuming $f_i$ to be smooth with respect to its first argument,
we may approximate the difference $f_i(\mathbf{x}+\mathbf{v}^i\dt,n\,\dt)-f_i(\mathbf{x},n\,\dt)$
with the scalar product between the gradient of $f$ with respect to $\mathbf{x}$ and the increment
$\mathbf{v}^i\dt$, getting the relation
\begin{equation*}
	\frac{1}{\dt}\bigl\{f_i(\mathbf{x}+\mathbf{v}^i\dt,(n+1)\dt)-f_i(\mathbf{x}+\mathbf{v}^i\dt,n\,\dt)\bigr\}
		+\mathbf{v}^i\cdot\nabla_{\mathbf{x}} f_i+\sum_{j\neq i}\bigl(\mu_{ij} f_i-\mu_{ji} f_j\bigr)=0.
\end{equation*}
Passing to the limit $\dt\to 0$, we formally obtain \eqref{discrkin}.

\subsection*{Properties of the first order linear system.}
The special structure of the zero-th order term in \eqref{discrkin} triggers a number of 
additional properties for the solutions to the system.

\begin{proposition}\label{prop:decayeta}
Assume \eqref{symmtrans}.
Let $\eta$ be a Lipschitz continuous and convex function from $\R$ to $\R$.
Then, each solution $\mathbf{f}$ to \eqref{discrkin} is such that, whenever the
right-hand side is finite,
\begin{equation}\label{decayeta}
	\sum_{i=1}^{n} \int_{\R^d} \eta(f_i)(\mathbf{x},t_2)\,d\mathbf{x}
		\leq \sum_{i=1}^{n} \int_{\R^d} \eta(f_{i})(\mathbf{x},t_1)\,d\mathbf{x}
\end{equation}
for any $t_1<t_2$.
\end{proposition}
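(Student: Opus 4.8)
The plan is to regard the left-hand side of \eqref{decayeta} as the time-dependent functional
\[
  E(t):=\sum_{i=1}^n \int_{\R^d}\eta(f_i)(\mathbf{x},t)\,d\mathbf{x}
\]
and to prove that $t\mapsto E(t)$ is non-increasing by differentiating it and inserting the equations \eqref{discrkin}. At the formal level,
\[
  \frac{d}{dt}E(t)=-\sum_{i=1}^n\int_{\R^d}\eta'(f_i)\,\mathbf{v}^i\cdot\nabla_{\mathbf{x}}f_i\,d\mathbf{x}
    -\sum_{i=1}^n\int_{\R^d}\eta'(f_i)\sum_{j\neq i}\bigl(\mu_{ij}f_i-\mu_{ji}f_j\bigr)\,d\mathbf{x},
\]
so the whole proof reduces to checking that the first term vanishes and the second is non-positive; once $E'(t)\le 0$ is established, integrating between $t_1$ and $t_2$ gives \eqref{decayeta}.

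For the transport term I would use that every $\mathbf{v}^i$ is a constant vector, so that $\eta'(f_i)\,\mathbf{v}^i\cdot\nabla_{\mathbf{x}}f_i=\dive\bigl(\mathbf{v}^i\,\eta(f_i)\bigr)$; integrating over $\R^d$ and exploiting the decay of the solution at infinity, this contribution vanishes by the divergence theorem. For the transition term I would exploit the symmetry \eqref{symmtrans}: writing the double sum over ordered pairs and averaging the contribution of $(i,j)$ with that of $(j,i)$ yields
\[
  -\sum_{i=1}^n\int_{\R^d}\eta'(f_i)\sum_{j\neq i}\mu_{ij}\bigl(f_i-f_j\bigr)\,d\mathbf{x}
    =-\frac12\sum_{i=1}^n\sum_{j\neq i}\mu_{ij}\int_{\R^d}\bigl(\eta'(f_i)-\eta'(f_j)\bigr)\bigl(f_i-f_j\bigr)\,d\mathbf{x}.
\]
Since $\eta$ is convex, $\eta'$ is non-decreasing, so $\bigl(\eta'(f_i)-\eta'(f_j)\bigr)\bigl(f_i-f_j\bigr)\ge 0$ pointwise; together with $\mu_{ij}\ge 0$ this makes the term non-positive, as desired.

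The genuine difficulty is technical rather than conceptual: $\eta$ is only Lipschitz and convex, so $\eta'$ need not exist everywhere, and the solutions are merely mild, so the differentiation of $E$ and the integration by parts above are not immediately licit. I would justify them by a double approximation. First, replace $\eta$ by a mollification $\eta_\varepsilon=\eta*\rho_\varepsilon$, which is smooth and convex with $|\eta_\varepsilon'|\le L$ ($L$ the Lipschitz constant) and $\eta_\varepsilon\to\eta$ locally uniformly; the computations above are then rigorous for $\eta_\varepsilon$ and for smooth, compactly supported solutions, the latter obtained by approximating $\mathbf{f}_0$ with smooth data (finite propagation speed keeps them compactly supported, so the boundary terms really vanish) and invoking continuous dependence. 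The passage $\varepsilon\to 0$ rests on dominated convergence for $\int_{\R^d}\eta_\varepsilon(f_i)\,d\mathbf{x}$, controlled through $|\eta_\varepsilon(s)|\le|\eta_\varepsilon(0)|+L|s|$ and the finiteness hypothesis on the right-hand side of \eqref{decayeta}; the freedom to subtract from $\eta$ a linear function $s\mapsto a\,s$ — harmless since $\sum_i\int_{\R^d}f_i\,d\mathbf{x}=\int_{\R^d}u\,d\mathbf{x}$ is conserved in time by the transport equation for $u$ — can be used to normalize $\eta$ so that the relevant integrals converge. I expect this bookkeeping of integrability and convergence, not the sign computation, to be the delicate point.
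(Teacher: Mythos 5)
Your proposal is correct and follows essentially the same route as the paper: multiply \eqref{discrkin} by $\eta'(f_i)$, observe that the transport contribution is a pure divergence $\dive\bigl(\mathbf{v}^i\,\eta(f_i)\bigr)$ integrating to zero, and symmetrize the transition term via \eqref{symmtrans} into $\frac12\sum_{i,j}\mu_{ij}\bigl(\eta'(f_i)-\eta'(f_j)\bigr)(f_i-f_j)\geq 0$, which is exactly the paper's computation of $\mathcal{I}[\mathbf{f}]$ in \eqref{Iforms}. The only difference is presentational: you differentiate $E(t)$ in time where the paper integrates the pointwise identity over $\R^d\times[t_1,t_2]$, and you spell out the mollification of $\eta$ and the approximation by smooth compactly supported data that the paper compresses into ``a density argument.''
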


\begin{proof}
Let us consider the case of a smooth initial datum $\mathbf{f}_0$ so that
the solution $\mathbf{f}$ is also smooth.
The general case can be obtained by applying a density argument.

Given a Lipschitz continuous function $\eta$, 
multiplying \eqref{discrkin} by $\eta'(f_i)$ and summing up with respect to $i$,
we infer
\begin{equation*}
	\frac{\partial }{\partial t}\sum_{i=1}^n \eta(f_i)
		+\sum_{\ell=1}^d \frac{\partial}{\partial x_\ell}\left(\sum_{i=1}^n  v_i^\ell\,\eta(f_i)\right)
  		+\sum_{i=1}^n \sum_{j\neq i} \eta'(f_i) \bigl(\mu_{ji}\,f_i-\mu_{ij}\,f_j\bigr)=0,
\end{equation*}
Given $t_1<t_2$, integrating with respect to $(x,t)$ in $\R^d\times [t_1,t_2]$, we get
\begin{equation*}
	\sum_{i=1}^n \int_{\R^d}  \eta(f_i)(\mathbf{x},t_2)\,d\mathbf{x}
		+\int_{\R^d}\mathcal{I}[\mathbf{f}]\,d\mathbf{x}\,dt
		=\sum_{i=1}^n\int_{\R^d}  \eta(f_i)(\mathbf{x},t_1)\,d\mathbf{x},
\end{equation*}
where the function $\mathcal{I}$ is defined by
\begin{equation*}
	\mathcal{I}[\mathbf{f}]:=\sum_{i=1}^n \sum_{j\neq i} \eta'(f_i) \bigl(\mu_{ji}\,f_i-\mu_{ij}\,f_j\bigr).
\end{equation*}
Since $\mu_{ij}=\mu_{ji}$,  there holds
\begin{equation}\label{Iforms}
	\begin{aligned}
	\mathcal{I}[\mathbf{f}]&=\sum_{i, j=1}^n \eta'(f_i) \bigl(\mu_{ji}\,f_i-\mu_{ij}\,f_j\bigr)
	=\sum_{i,j=1}^n \mu_{ji}\left(\eta'(f_i)-\eta'(f_j)\right)f_i\\
	&=\frac12\sum_{i,j=1}^n \mu_{ji}\left(\eta'(f_i)-\eta'(f_j)\right)f_i
		+\frac12\sum_{i,j=1}^n \mu_{ij}\left(\eta'(f_j)-\eta'(f_i)\right)f_j\\
	&=\frac12\sum_{i,j=1}^n \mu_{ij}\left(\eta'(f_i)-\eta'(f_j)\right)(f_i-f_j)\geq 0,
	\end{aligned}
\end{equation}
for $\eta'$ increasing and $\mu_{ij}\geq 0$.
\end{proof}

In particular, the solution semigroup of \eqref{discrkin} is such that the $L^p-$norms
are not increasing in time for any $p\geq 1$.
Additionally, also a comparison principle holds as a consequence
of Proposition \ref{prop:decayeta}.

\begin{corollary}
Let $\mathbf{f}$ and $\mathbf{g}$ be two solutions to \eqref{discrkin} corresponding
to the initial conditions $\mathbf{f}(\mathbf{x},0)=\mathbf{f}_0(\mathbf{x})$ and
$\mathbf{g}(\mathbf{x},0)=\mathbf{g}_0(\mathbf{x})$, respectively.
If the initial data $\mathbf{f}_0$ and $\mathbf{g}_0$ are such that
\begin{equation*}
	f_{0,i}\leq g_{0,i}\qquad\quad\forall i=1,\dots,n
\end{equation*}
then the same ordering relation holds for any positive time, i.e.
\begin{equation*}
	f_{i}(\mathbf{x},t)\leq g_{i}(\mathbf{x},t)\qquad\quad\forall i=1,\dots,n
\end{equation*}
for any $t>0$.
\end{corollary}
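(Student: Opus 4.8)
The plan is to reduce the comparison statement to a positivity-preservation property and then to exploit the family of Lyapunov functionals supplied by Proposition \ref{prop:decayeta}. First I would set $\mathbf{h}:=\mathbf{g}-\mathbf{f}$. Since the system \eqref{discrkin} is linear and homogeneous, $\mathbf{h}$ is again a solution, now with initial datum $h_{0,i}=g_{0,i}-f_{0,i}\geq 0$ for every $i$. Thus the corollary is equivalent to the assertion that \eqref{discrkin} propagates componentwise non-negativity: if $h_{0,i}\geq 0$ for all $i$, then $h_i(\cdot,t)\geq 0$ for all $i$ and all $t>0$.

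To establish this, I would apply Proposition \ref{prop:decayeta} with the convex, Lipschitz choice $\eta(s)=\max\{-s,0\}$, the negative part. This function is non-negative, vanishes exactly on $[0,\infty)$, and has non-decreasing derivative, so it is admissible. For the initial datum we have $\eta(h_{0,i})\equiv 0$, hence the right-hand side of \eqref{decayeta} equals zero, so in particular it is finite and the proposition applies. Inequality \eqref{decayeta} then yields, for every $t>0$,
\begin{equation*}
	0\leq\sum_{i=1}^n\int_{\R^d}\eta(h_i)(\mathbf{x},t)\,d\mathbf{x}
		\leq\sum_{i=1}^n\int_{\R^d}\eta(h_{0,i})(\mathbf{x})\,d\mathbf{x}=0.
\end{equation*}
Since $\eta\geq 0$, each integrand must vanish almost everywhere, i.e. $\max\{-h_i(\mathbf{x},t),0\}=0$, which is precisely $h_i(\mathbf{x},t)\geq 0$. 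Unwinding $\mathbf{h}=\mathbf{g}-\mathbf{f}$ gives $f_i\leq g_i$ for all $i$ and all $t>0$, as desired.

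I do not expect a serious obstacle here; the only point requiring a little care is the regularity issue underlying the use of $\eta(s)=\max\{-s,0\}$, whose derivative jumps at the origin. As in the proof of Proposition \ref{prop:decayeta}, this is handled by first taking a smooth initial datum and a smooth convex approximation $\eta_\varepsilon$ of the negative part (for instance a mollification chosen so that $\eta_\varepsilon\geq 0$ and $\eta_\varepsilon$ remains convex with $\eta_\varepsilon\to\max\{-s,0\}$ uniformly), deriving \eqref{decayeta} for $\eta_\varepsilon$, and then passing to the limit $\varepsilon\to 0$ together with the density argument already invoked there. The essential content is simply the observation that the negative part is one of the admissible convex functionals, so that its non-increase forces the negative part of each component to stay identically zero.
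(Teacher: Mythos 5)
Your proof is correct and takes essentially the same route as the paper: there, one reduces by linearity to the case $g_0=0$ and applies Proposition \ref{prop:decayeta} with $\eta(s)=[s]_+$, which is precisely your argument up to replacing $\mathbf{h}=\mathbf{g}-\mathbf{f}$ by its negative, so the vanishing of the initial functional forces the offending part of the difference to vanish for all $t>0$. Your closing remark on mollifying $\eta$ is just a more explicit rendering of the density argument the paper already invokes in the proof of Proposition \ref{prop:decayeta}.
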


\begin{proof} The linearity of the equation permits to restrict the attention to the case $g_0=0$.
Choosing $\eta(s)=[s]_+$ in \eqref{decayeta}, we deduce
\begin{equation*}
	\sum_{i=1}^n \int_{\mathbb{R}^d}  [f_i(\mathbf{x},t)]_+\,d\mathbf{x}
	\leq \sum_{i=1}^n \int_{\mathbb{R}^d}  [f_i(\mathbf{x},0)]_+\,d\mathbf{x}.
\end{equation*}
In particular, if $f_i\leq 0$ for any $i$ at time $t=0$, then $f_i\leq 0$ for any $t\geq 0$ for any $i$.
\end{proof}

Property \eqref{decayeta} indicates a form a weak dissipation.
If the transition matrix $\mathbb{B}$ is irreducible (see \cite{BermPlem79}, Chap.2, Sect.2)
and an appropriate additional assumption on the velocities $\mathbf{v}^i$ holds, the
{\bf Shizuta--Kawashima dissipativity condition} is valid (see \cite{ShizKawa85}
and, for a discussion on its limits of validity, see \cite{MascNata10}).

\begin{proposition}\label{prop:sk}
Let the matrix $\mathbb{B}$ be irreducible and assume 
\begin{equation}\label{linspan}
	\linspan\{\mathbf{v}^i-\mathbf{v}^j\,:\, i,j=1,\dots,n\}=\R^d
\end{equation}
Then for any $F\in\ker\mathbb{B}$ there holds
\begin{equation*}
	\lambda\,F+\sum_{j=1}^n k_j \mathbb{A}_j F\neq 0
\end{equation*}
for any $\lambda\in\R$ and any $\mathbf{k}=(k_1,\dots,k_d)$ with $\mathbf{k}\neq 0$.
\end{proposition}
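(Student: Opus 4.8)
The plan is to reduce the whole assertion to an explicit description of $\ker\mathbb{B}$ combined with the spanning hypothesis \eqref{linspan}. (Note that for $F=0$ the expression to be controlled is trivially zero, so the statement is to be understood for nonzero $F$, which is the genuine content of the Shizuta--Kawashima condition.) First I would observe that $\mathbb{B}$ is symmetric and positive semi-definite: repeating verbatim the symmetrization already carried out in \eqref{Iforms} with the choice $\eta'(s)=s$ gives, for every $F=(f_1,\dots,f_n)$,
\begin{equation*}
	\langle \mathbb{B} F, F\rangle = \sum_{i,j=1}^n \mu_{ij}\,f_i\,(f_i-f_j) = \frac12\sum_{i,j=1}^n \mu_{ij}\,(f_i-f_j)^2 \geq 0.
\end{equation*}
Hence $\mathbb{B} F=0$ is equivalent to $\langle \mathbb{B} F,F\rangle=0$, which forces $f_i=f_j$ for every pair $(i,j)$ with $\mu_{ij}>0$. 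The irreducibility of $\mathbb{B}$ says precisely that the undirected graph having an arc between $i$ and $j$ whenever $\mu_{ij}>0$ is connected; propagating the equalities $f_i=f_j$ along a path joining any two indices then shows that $F$ is constant. Therefore $\ker\mathbb{B}=\linspan\{\mathbf{1}\}$ with $\mathbf{1}=(1,\dots,1)$.

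Next I would evaluate the quantity of interest on kernel elements. A nonzero $F\in\ker\mathbb{B}$ has the form $F=c\,\mathbf{1}$ with $c\neq 0$, and by linearity it suffices to treat $c=1$. Since $\mathbb{A}_j=\diag(v^i_j)$, one has $(\mathbb{A}_j\mathbf{1})_i=v^i_j$, so the $i$-th component of $\lambda\,F+\sum_{j=1}^d k_j\mathbb{A}_j F$ equals
\begin{equation*}
	\lambda+\sum_{j=1}^d k_j\,v^i_j=\lambda+\mathbf{k}\cdot\mathbf{v}^i.
\end{equation*}

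Finally I would argue by contradiction. If the vector $\lambda\,F+\sum_j k_j\mathbb{A}_j F$ vanished, then $\lambda+\mathbf{k}\cdot\mathbf{v}^i=0$ for all $i=1,\dots,n$, whence $\mathbf{k}\cdot(\mathbf{v}^i-\mathbf{v}^j)=0$ for all $i,j$. By hypothesis \eqref{linspan} the differences $\mathbf{v}^i-\mathbf{v}^j$ span $\R^d$, so $\mathbf{k}$ is orthogonal to all of $\R^d$ and hence $\mathbf{k}=0$, contradicting $\mathbf{k}\neq 0$. I expect the only genuinely nontrivial step to be the identification of $\ker\mathbb{B}$: the positive semi-definiteness of $\mathbb{B}$ together with the translation of irreducibility into connectedness of the associated graph are exactly what pin the kernel down to the one-dimensional span of $\mathbf{1}$. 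Once this is in hand, the remainder is elementary linear algebra driven by \eqref{linspan}.
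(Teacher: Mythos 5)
Your proposal is correct and follows essentially the same route as the paper: identify $\ker\mathbb{B}=\linspan\{\mathbf{1}\}$, use $(\mathbb{A}_j\mathbf{1})_i=v^i_j$ to reduce the claim to $\lambda+\mathbf{k}\cdot\mathbf{v}^i=0$ for all $i$, and conclude $\mathbf{k}=0$ from \eqref{linspan}. The only difference is that you fill in a step the paper merely asserts, namely the one-dimensionality of the kernel, which you prove correctly via the graph-Laplacian quadratic form $\langle\mathbb{B}F,F\rangle=\frac12\sum_{i,j}\mu_{ij}(f_i-f_j)^2$ and the equivalence of irreducibility with connectedness of the associated graph.
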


\begin{proof}
The specific structure of the matrix $\mathbb{B}$ and its irreducibility together imply
that $\ker\mathbb{B}$ is one-dimensional and generated by the vector $\mathbf{1}=(1,\dots,1)$.
Since $\bigl(\mathbb{A}_j \mathbf{1}\bigr)_i=v^i_j$, we infer
\begin{equation*}
	\lambda+\sum_{j=1}^n k_j \bigl(\mathbb{A}_j \mathbf{1}\bigr)_i
		=\lambda+\sum_{j=1}^n k_j v^i_j
		=\lambda+\mathbf{k}\cdot \mathbf{v}^i
	\qquad\qquad i=1,\dots,n.
\end{equation*}
If $\lambda+\mathbf{k}\cdot \mathbf{v}^i=0$ is null for any $i$, then
\begin{equation*}
	\mathbf{k}\cdot (\mathbf{v}^i-\mathbf{v}^j)=0
	\qquad\qquad\forall i,j
\end{equation*}
and thus $\mathbf{k}=0$ as a consequence of \eqref{linspan}.
\end{proof}

The role of the two hypotheses in Proposition \ref{prop:sk} can be clarified
showing that under these assumptions, invoking the {\bf LaSalle Invariance Principle},
the solution to the Cauchy problem \eqref{discrkin}--\eqref{initialdatum}
decayes to zero as $t\to+\infty$ for integrable initial datum in some $L^p$.
Indeed, the $\omega-$limit of a given trajectory is contained in the largest invariant region
for which the functional $\sum \int \eta(f_i)$ is constant.
For $\eta'$ strictly increasing, the final representation of $\mathcal{I}[\mathbf{f}]$
in \eqref{Iforms} together with the requirement of the irreducibility of the matrix $\mathbb{B}$,
shows that
\begin{equation*}
	\mathcal{I}[\mathbf{f}]=0\qquad\iff\qquad f_i=f_j\quad\forall\,i,j.
\end{equation*}
Hence, solutions preserving the value of $\sum \int \eta(f_i)$ are such that
$f_i=g$ for some scalar function $g$ satisfying the homogeneous system
\begin{equation*}
  \frac{\partial g}{\partial t}+\mathbf{v}^i\cdot\nabla_x g=0, \qquad i=1,\dots,n.
\end{equation*}
which gives
\begin{equation*}
  (\mathbf{v}^i-\mathbf{v}^j)\cdot\nabla_{\mathbf{x}} g=0, \qquad i,j=1,\dots,n.
\end{equation*}
Thus, as a consequence of \eqref{linspan}, $g$ is constant.

A rigorous application of the principle requires some compactness of the trajectories,
guaranteed by Sobolev estimates at the price of some additional regularity requirements
on the initial data.
Indeed, taking again advantage of the dissipation described by Proposition \ref{prop:decayeta}
and the linear structure of the equation, one easily obtains
\begin{equation*}
	\sum_{i=1}^{n} |f_i(\cdot,t)|_{{}_{W^{k,p}}}^p
		\leq \sum_{i=1}^{n} |f_{0,i}(\cdot,t)|_{{}_{W^{k,p}}}^p
\end{equation*}
for any $k\geq 0, p\geq 1$.

All the above arguments concur in showing that the system \eqref{discrkin} shares
a number of properties with scalar linear diffusion equations.
Our next aim is to show that the connection between the two classes is much stronger than that
and that it is possible to identify a specific linear diffusion equation of paraboic type describing
the large-time behavior of the solution of the hyperbolic system \eqref{discrkin}.

\section{Drift velocity and diffusion matrix}\label{sect:drift}

Given a square matrix $\mathbb{A}$ of dimension $n\times n$ and $k$ indeces $i_1,\dots,i_k$, 
let $\mathbb{A}(i_1,\dots,i_k)$ be the principal minor that results
from deleting sets of $k$ rows and columns with indeces $i_1,\dots,i_k$.
By definition, we set $\mathbb{A}(1,\dots,n):=1$.
Morevor, given $n$ column vectors $w_1,\dots, w_n$ in $\R^n$, let 
\begin{equation*}
	w_1\land \dots \land w_n:=\det(w_1\dots w_n).
\end{equation*}
Applying Laplace--Fourier transform, that is converting time derivatives with multiplication
by $\lambda\in\R$ and space derivatives with scalar multiplication by $\mathbf{k}\in i\R^d$, 
\begin{equation*}
	(\partial_t,\nabla_{\mathbf{x}})\quad \mapsto \quad (\lambda,\mathbf{k})\cdot
\end{equation*}
the partial differential equation \eqref{discrkin} is converted into the linear system
\begin{equation*}
 (\lambda+\mathbf{v}^i\cdot \mathbf{k})\hat f_i+\sum_{j=1}^{n}\mu_{ij}\,\hat f_j=0
 \qquad\qquad i=1,\dots,n,
\end{equation*}
where $\hat f_i=\hat f_i(\lambda,\mathbf{k})$ is the transform of $f$.
Hence, the {\bf dispersion relation} of the system \eqref{discrkin} is
the polynomial relation in $\lambda$ and $\mathbf{k}$
\begin{equation*}
	p(\lambda,\mathbf{k}):=\det(\lambda\,\mathbb{I}+\diag(\mathbf{v}^i\cdot\mathbf{k})+\mathbb{B})=0.
\end{equation*}
Denoting by $B_i$ the columns of the matrix $\mathbb{B}$, the dispersion relation can be
written in compact form as
\begin{equation*}
	p(\lambda,\mathbf{k})=V_1(\lambda,\mathbf{k}) \land \cdots \land V_n(\lambda,\mathbf{k})=0.
\end{equation*}
where $V^i(\lambda,\mathbf{k}):=\bigl(\lambda+\mathbf{v}^i\cdot\mathbf{k}\bigr)E_i+B_i$
and $E_i=(\delta_{ij})_{j=1,\dots,n}$ is the $i$-th element of the canonical basis of $\R^n$.

Given $\mathbf{k}$, the main target of the analysis is to determine the location of the values
$\lambda=\lambda(\mathbf{k})$ such that the dispersion relation $p(\lambda,\mathbf{k})=0$ is satisfied.
In particular, being interested in the large-time behavior of the solutions, the attention is mainly 
directed to the region of $\mathbf{k}$ with $|\mathbf{k}|$ small.

\begin{proposition}\label{prop:expansion00}
Let $\mathbb{B}$ be a real $n\times n$ singular matrix such that
\begin{equation*}
	I_1(\mathbb{B}):=\sum_{i=1}^{n} \det\mathbb{B}(i)\neq 0.
\end{equation*}
Then, there is a smooth function $\mathbf{k}\mapsto \lambda(\mathbf{k})$ defined in a neighborhood
of $\mathbf{0}$ such that $p(\lambda,\mathbf{k})=0$ if and only if $\lambda=\lambda(\mathbf{k})$.
Moreover, there hold
\begin{equation*}
	\begin{aligned}
	&\textrm{first order:}		& \qquad	&
		\frac{\partial\lambda}{\partial k_\ell}(\mathbf{0})
			=-\frac{1}{I_1(\mathbb{B})}\sum_{i=1}^{n} v_\ell^i \det\mathbb{B}(i),\\
	&\textrm{second order:}	& \qquad	&
		\frac{\partial^2 \lambda}{\partial k_j\partial k_\ell}(\mathbf{0})
			=-\frac{1}{I_1(\mathbb{B})}\sum_{\{h,k\,:\,h\neq k\}} (v_h^j\,v_k^\ell)\,\mathbb{B}(h,k)
		\quad\textrm{if}\;\nabla_{\mathbf{k}}\lambda(\mathbf{0})=\mathbf{0}.
	\end{aligned}
\end{equation*}
\end{proposition}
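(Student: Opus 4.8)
The plan is to read the whole statement off the behaviour of the single scalar equation $p(\lambda,\mathbf{k})=0$ near the base point $(\lambda,\mathbf{k})=(0,\mathbf{0})$, exploiting the multilinear (wedge) form $p=V^1\land\cdots\land V^n$ with $V^i=(\lambda+\mathbf{v}^i\cdot\mathbf{k})E_i+B_i$, where $B_i$ are the columns of $\mathbb{B}$. The whole proposition hinges on one identity, namely that the $\lambda$-derivative of $p$ at the base point equals $I_1(\mathbb{B})$.

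First I would check that $(0,\mathbf{0})$ is a zero and compute $\partial_\lambda p$ there. At $\mathbf{k}=\mathbf{0}$ one has $V^i=\lambda E_i+B_i$, so $p(\lambda,\mathbf{0})=\det(\lambda\mathbb{I}+\mathbb{B})$, and singularity of $\mathbb{B}$ gives $p(0,\mathbf{0})=\det\mathbb{B}=0$. Since $\partial_\lambda V^i=E_i$, the Leibniz rule for the wedge product yields
\[\frac{\partial p}{\partial\lambda}=\sum_{i=1}^n V^1\land\cdots\land E_i\land\cdots\land V^n,\]
with $E_i$ in the $i$-th slot. Evaluating at $(0,\mathbf{0})$, where $V^m=B_m$, each summand is a determinant whose $i$-th column is $E_i$; expanding along that column gives the principal minor $\det\mathbb{B}(i)$ (the cofactor sign is $+1$, the deleted index sitting on the diagonal). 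Hence $\partial_\lambda p(0,\mathbf{0})=\sum_i\det\mathbb{B}(i)=I_1(\mathbb{B})\neq 0$, so $\lambda=0$ is a simple root. The implicit function theorem then produces a unique smooth branch $\mathbf{k}\mapsto\lambda(\mathbf{k})$ with $\lambda(\mathbf{0})=0$ and $p(\lambda(\mathbf{k}),\mathbf{k})\equiv 0$ which exhausts the zero set of $p$ in a neighbourhood of $(0,\mathbf{0})$; this is exactly the claimed local equivalence.

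Next I would extract the derivatives by implicit differentiation. From $\partial_\lambda p\,\lambda_{k_\ell}+\partial_{k_\ell}p=0$ I obtain $\lambda_{k_\ell}(\mathbf{0})=-\partial_{k_\ell}p(0,\mathbf{0})/I_1(\mathbb{B})$. To evaluate $\partial_{k_\ell}p$ I use $\partial_{k_\ell}V^i=v_\ell^i E_i$ and the same Leibniz rule,
\[\frac{\partial p}{\partial k_\ell}=\sum_{i=1}^n v_\ell^i\,V^1\land\cdots\land E_i\land\cdots\land V^n,\]
which at $(0,\mathbf{0})$ collapses to $\sum_i v_\ell^i\det\mathbb{B}(i)$, giving the first-order formula. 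For the second order I would differentiate the relation once more and set $\mathbf{k}=\mathbf{0}$; under the hypothesis $\nabla_{\mathbf{k}}\lambda(\mathbf{0})=\mathbf{0}$ every term carrying a factor $\lambda_{k_j}(\mathbf{0})$ or $\lambda_{k_\ell}(\mathbf{0})$ drops out, leaving $\lambda_{k_jk_\ell}(\mathbf{0})=-\partial^2_{k_jk_\ell}p(0,\mathbf{0})/I_1(\mathbb{B})$. Differentiating the displayed expression for $\partial_{k_\ell}p$ in $k_j$ (now with $\partial_{k_j}V^h=v_j^h E_h$) produces a double wedge with $E_i$ in slot $i$ and $E_h$ in slot $h$, $h\neq i$; a Laplace expansion along columns $i,h$ evaluates each such term at the base point to the second principal minor $\det\mathbb{B}(h,k)$ (again with sign $+1$). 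The result is $\partial^2_{k_jk_\ell}p(0,\mathbf{0})=\sum_{h\neq k}v_j^h v_\ell^k\det\mathbb{B}(h,k)$, which yields the asserted second-order formula.

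I do not anticipate a genuine obstacle: the conceptual content is entirely the identity $\partial_\lambda p(0,\mathbf{0})=I_1(\mathbb{B})$, which simultaneously supplies the nondegeneracy needed for the implicit function theorem and the common denominator of both formulas. The only points requiring care are the bookkeeping of the iterated cofactor (Laplace) expansions and, for the second derivative, the verification that the cross terms really vanish because $\nabla_{\mathbf{k}}\lambda(\mathbf{0})=\mathbf{0}$; the wedge-product formalism keeps both computations transparent and sign-free.
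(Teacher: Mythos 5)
Your proposal is correct and follows essentially the same route as the paper: the Leibniz rule applied to the wedge representation $p=V^1\land\cdots\land V^n$ to get $\partial_\lambda p(0,\mathbf{0})=I_1(\mathbb{B})$, the Implicit Function Theorem for the simple root, implicit differentiation for the first-order formula, and the observation that all cross terms in the twice-differentiated relation carry a factor of $\nabla_{\mathbf{k}}\lambda(\mathbf{0})$ and hence vanish, reducing the Hessian to the double-wedge computation of $\partial^2_{k_jk_\ell}p(0,\mathbf{0})$. Your explicit check of the cofactor signs (the unit vector sitting in its own slot, so the principal minors appear with sign $+1$) is a detail the paper leaves implicit, but the argument is the same.
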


\begin{proof}
Since $\det \mathbb{B}=0$, the couple $(0,\mathbf{0})$ satisfies the dispersion relation.
Since
\begin{equation*}
	\begin{aligned}
 	\frac{\partial p}{\partial \lambda}(\lambda,\mathbf{k})
  		&=\frac{\partial V_1}{\partial \lambda} \land V_2\land  \cdots \land V_n+\dots
   			+V_1 \land \cdots \land V_{n-1}\land \frac{\partial V_n}{\partial\lambda}\\
  		&=E_1 \land V_2\land \cdots \land V_n+\dots+V_1 \land \cdots  \land V_{n-1} \land E_n,
	\end{aligned}
\end{equation*}
there holds
\begin{equation*}
 	\frac{\partial p}{\partial \lambda}(0,\mathbf{0})
	 	=E_1 \land B_2\land \cdots \land B_n+\dots+B_1 \land \cdots  \land B_{n-1} \land E_n
  		=I_1(\mathbb{B})
\end{equation*}
and the existence of the function $\lambda$ follows from the Implicit function Theorem.

Moreover, differentiating with respect to $k_\ell$ for $\ell\in\{1,\dots,n\}$, we obtain. 
\begin{equation}\label{firstderivative}
	\begin{aligned}
 	\frac{\partial p}{\partial k_\ell}(\lambda,\mathbf{k})
  		&=\frac{\partial V_1}{\partial k_\ell} \land V_2 \land \cdots \land V_n+\dots
   			+V_1 \land \cdots \land V_{n-1} \land \frac{\partial V_n}{\partial k_\ell}\\
  		&=v_\ell^1 E_1\land V_2 \land \cdots \land V_n+\dots
   			+V_1 \land \cdots \land V_{n-1} \land v_\ell^n E_n;
	\end{aligned}
\end{equation}
and thus, calculating at $(0,\mathbf{0})$, we infer
\begin{equation*}
	\begin{aligned}
 	\frac{\partial p}{\partial k_\ell}(0,\mathbf{0})
  		&=v_\ell^1 E_1\land B_2 \land \cdots \land B_n+\dots
   			+B_1 \land \cdots \land B_{n-1} \land v_\ell^n E_n\\
		&=\sum_{i=1}^n v_\ell^i \det \mathbb{B}(i)
	\end{aligned}
\end{equation*}
which gives the first order expansion.

Differentating with respect to $k_\ell$ and then with respect to $k_j$ the relation $p(\lambda(k),k)=0$,
we get the equality
\begin{equation*}
	\frac{\partial^2 p}{\partial\lambda^2}
		\frac{\partial \lambda}{\partial k_\ell}\frac{\partial \lambda}{\partial k_j}
		+\frac{\partial^2 p}{\partial\lambda\partial k_j}\frac{\partial \lambda}{\partial k_\ell}
		+\frac{\partial p}{\partial\lambda}\frac{\partial^2 \lambda}{\partial k_j\partial k_\ell}
		+\frac{\partial^2 p}{\partial\lambda\partial k_\ell}\frac{\partial \lambda}{\partial k_j}
		+\frac{\partial^2 p}{\partial k_j\partial k_\ell}=0.
\end{equation*}
Calculating at $(0,\mathbf{0})$, since $\partial_\lambda p(0,\mathbf{0})=I_1(B)$ and
$\nabla_{\mathbf k} \lambda(\mathbf{0})=\mathbf{0}$,
we obtain
\begin{equation*}
	\frac{\partial^2 \lambda}{\partial\kappa_j\partial\kappa_\ell}(0,\mathbf{0})
	=-\frac{1}{I_1(\mathbb{B})}\frac{\partial^2 p}{\partial\kappa_j\partial\kappa_\ell}(0,\mathbf{0}).
\end{equation*}
Upon differentiation of \eqref{firstderivative}, we deduce
\begin{equation*}
	\begin{aligned}
 	\frac{\partial^2 p}{\partial k_j\,\partial k_\ell}(\lambda, \mathbf{k})
  	&=\frac{\partial}{\partial k_j}\left(v_1^\ell E_1\land V_2 \land \cdots \land V_n\right)
  			+\dots+\frac{\partial}{\partial k_j}\left(V_1 \land \cdots \land V_{n-1} \land v_n^\ell E_n\right)\\
  	&=\sum_{\{h,k\,:\,h\neq k\}} (v_h^j\,v_k^\ell)\,
   	V_1\land\cdots \land E_h \land \cdots \land E_k \land \cdots \land V_n.
 	 \end{aligned} 
\end{equation*}
Therefore, calculating at $(0,\mathbf{0})$, we end up with
\begin{equation*}
	\begin{aligned}
 	\frac{\partial^2 p}{\partial\kappa_j\,\partial\kappa_\ell}(0,\mathbf{0})
 		 &=\sum_{\{h,k\,:\,h\neq k\}} (v_h^j\,v_k^\ell)\,
  			B_1\land\cdots \land E_h \land \cdots \land E_k \land \cdots \land B_n\\
  		&=\sum_{\{h,k\,:\,h\neq k\}} (v_h^j\,v_k^\ell)\,\mathbb{B}(h,k),
	\end{aligned}
\end{equation*}
that gives the second order derivatives of $\lambda=\lambda(\mathbf{k})$ in the origin.
\end{proof}

Condition $I_1(\mathbb{B})\neq 0$ is satisfied if the matrix $\mathbb{B}$ is irreducible.
From now on, we will consider such assumption to be satisfied.

The expressions for the first and second order term in the expansion of the function $\lambda$ 
at $\mathbf{k}=0$ can be restyled.
The gradient of $\lambda$ can be rewritten as
\begin{equation*}
	\nabla_{\mathbf{k}} \lambda(\mathbf{0})=-\frac{1}{I_1(\mathbb{B})}
		\bigl((\mathbb{B}(1),\dots,\mathbb{B}(n))\cdot(v^1_j,\dots,v^n_j)\bigr)_{j=1,\dots,d}
\end{equation*}
Also, when the gradient of $\lambda$ at $\mathbf{k}=\mathbf{0}$ is null,
the hessian matrix of $\lambda$ at the same point is 
\begin{equation*}
	D^2 \lambda(\mathbf{0})=-\frac{2}{I_1(\mathbb{B})}
		\sum_{h<k} \mathbb{B}(h,  k)\,(\mathbf{v}^h\otimes \mathbf{v}^k)^*
\end{equation*}
where $\otimes$ is the usual tensor product of vectors and ${}^*$ denotes the symmetric part of a matrix.

With a terminology that will be fully motivated in the subsequent Section, we define
the {\bf drift velocity} of the system \eqref{discrkin} to be the vector
\begin{equation}\label{drift}
	\mathbf{v}_{{}_{\textrm{drift}}}:=-\nabla_{\mathbf{k}} \lambda(\mathbf{0})
	=\frac{1}{I_1(\mathbb{B})}
		\bigl((\det\mathbb{B}(1),\dots,\det\mathbb{B}(n))\cdot(v^1_j,\dots,v^n_j)\bigr)_{j=1,\dots,d}
\end{equation}
and the {\bf diffusion matrix} of the system \eqref{discrkin} as
\begin{equation}\label{diffmatrix}
	\mathbb{D}:=D^2 \lambda(\mathbf{0})=-\frac{2}{I_1(\mathbb{B})}
		\sum_{i<j} \mathbb{B}(i,j)\,(\mathbf{v}^i\otimes \mathbf{v}^j)^*.
\end{equation}
In a frame moving with speed $\mathbf{v}_{{}_{\textrm{drift}}}$, the velocities $\mathbf{v}^i$
in the system are modified in $\mathbf{v}^i-\mathbf{v}_{{}_{\textrm{drift}}}$ and coherently, the 
gradient of $\lambda$ at $\mathbf{0}$ is null.
Thus, without loss of generality, we may assume that the conditions
\begin{equation*}
	(\det\mathbb{B}(1),\dots,\det\mathbb{B}(n))\cdot(v^1_j,\dots,v^n_j)=\mathbf{0}
	\qquad\qquad\forall j=1,\dots,d
\end{equation*}
henceforth holds.

\begin{example}\label{ex:goldsteinkac} [Goldstein--Kac model] \normalfont
The simplest example needs only two velocities and a transition matrix of the form
\begin{equation*}
 	\mathbb{B}=\begin{pmatrix}  \mu_{12} & -\mu_{21} \\  -\mu_{12} & \mu_{21} \end{pmatrix}.
\end{equation*}
for some positive values $\mu_{12}, \mu_{21}$.
Since $\mathbb{B}(1)=\mu_{21}$ and $\mathbb{B}(2)=\mu_{12}$, the drift velocity of the system is 
\begin{equation*}
	\mathbf{v}_{{}_{\textrm{drift}}}
		=\frac{\mu_{21}\mathbf{v}^1+\mu_{12}\mathbf{v^2}}{\mu_{21}+\mu_{12}}.
\end{equation*}
Since $\det\mathbb{B}(1,2)=1$, if $\mu_{21}\mathbf{v}^1+\mu_{12}\mathbf{v^2}=0$, there holds
\begin{equation*}
	\mathbb{D}=-\frac{2}{\mu_{21}+\mu_{12}}(\mathbf{v}^1\otimes \mathbf{v}^2)^*
		=\frac{2\mu_{21}/\mu_{12}}{\mu_{21}+\mu_{12}}(\mathbf{v}^1\otimes \mathbf{v}^1)
\end{equation*}
When the symmetry condition $\mu_{21}=\mu_{12}$ holds, $\mathbf{v}_{{}_{\textrm{drift}}}$
is the algebraic mean of $\mathbf{v}^1$ and $\mathbf{v}^2$ and the diffusion matrix $\mathbb{D}$
reduces to $(\mathbf{v}^1\otimes \mathbf{v}^1)/\mu_{12}$.
\end{example}

\begin{example}\label{ex:three} \normalfont
Going a step further, let us consider the symmetric case with $n=3$
given by the symmetric transition matrix
\begin{equation*}
 	\mathbb{B}=\left(\begin{array}{ccc}  
			b+c	& -c		& -b \\
			-c	& a+c	& -a \\
			-b	& -a		& a+b
			\end{array}\right).
\end{equation*}
so that
\begin{equation*}
	\begin{aligned}
		&\det \mathbb{B}(1)=\det \mathbb{B}(2)=\det \mathbb{B}(3)=ab+ac+bc,\\
		&\det \mathbb{B}(1,2)=a+b,\quad 
			\det \mathbb{B}(1,3)=a+c,\quad
				\det \mathbb{B}(2,3)=b+c.
	\end{aligned}
\end{equation*}
\begin{figure}[hbt] 
\begin{tikzpicture}[scale=1.75, auto, swap]
    \foreach \pos/\name in {{(1,0)/1}, {(-0.5,0.87)/2}, {(-0.5,-0.87)/3}}
        \node[vertex] (\name) at \pos {$\name$};
    \foreach \source/ \dest /\weight in {1/2/c, 2/3/a,3/1/b}
        \path[edge] (\source) -- node[weight] {$\weight$} (\dest);
 \end{tikzpicture}
\caption{\footnotesize Schematic represenation of the transitions in Examples \ref{ex:three}.}
\end{figure}
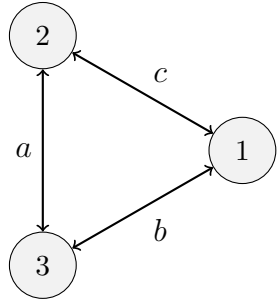
Therefore, the drift velocity is the average of $\mathbf{v}^1, \mathbf{v}^2, \mathbf{v}^3$
\begin{equation*}
	\mathbf{v}_{{}_{\textrm{drift}}}=\frac13\left(\mathbf{v}^1+\mathbf{v}^2+\mathbf{v}^3\right),
\end{equation*}
and, for $\mathbf{v}_{{}_{\textrm{drift}}}=0$, the diffusion matrix is
\begin{equation*}
	\begin{aligned}
	\mathbb{D}&=-\frac{2}{3(ab+bc+ca)}\bigl\{(a+b)(\mathbf{v}^1\otimes \mathbf{v}^2)^*
			+(a+c)(\mathbf{v}^1\otimes \mathbf{v}^3)^*
			+(b+c)(\mathbf{v}^2\otimes \mathbf{v}^3)^*\bigr\}\\
		&=-\frac{2}{3(ab+bc+ca)}\bigl\{
			a\bigl[(\mathbf{v}^1\otimes \mathbf{v}^2)^*+(\mathbf{v}^1\otimes \mathbf{v}^3)^*\bigr]\\
		&\hskip3.5cm
			+b\bigl[(\mathbf{v}^1\otimes \mathbf{v}^2)^*+(\mathbf{v}^2\otimes \mathbf{v}^3)^*\bigr]
			+c\bigl[(\mathbf{v}^1\otimes \mathbf{v}^3)^*+(\mathbf{v}^2\otimes \mathbf{v}^3)^*\bigr]\bigr\}\\
	&=\frac{2}{3(ab+bc+ca)}\left\{a\,\mathbf{v}^1\otimes \mathbf{v}^1
			+b\,\mathbf{v}^2\otimes \mathbf{v}^2+c\,\mathbf{v}^3\otimes \mathbf{v}^3\right\}
	\end{aligned}
\end{equation*}
\end{example}

\begin{example}\label{ex:fourside} \normalfont
Next, consider four velocities with symmetric transition rates
\begin{equation*}
 	\mathbb{B}=\left(\begin{array}{cccc}  
			\ast		& -\mu_{12}	& 0			& -\mu_{14} 	\\
			-\mu_{12}	& \ast           	& -\mu_{23} 	& 0 			\\
			0		& -\mu_{23}	& \ast           	& -\mu_{34} 	\\
			-\mu_{14} & 0			& -\mu_{34} 	&\ast	
			\end{array}\right),
\end{equation*}
where the diagonal entries are the sums of the column element changed by sign.
Then, there hold for $i\in\{1,2,3,4\}$
\begin{equation*}
	\det\mathbb{B}(i)=\mu_{12}\mu_{14}\mu_{23}+\mu_{12}\mu_{14}\mu_{34}
						+\mu_{12}\mu_{23}\mu_{34}+\mu_{14}\mu_{23}\mu_{34}
\end{equation*}
and, for the second order principal minors,
\begin{equation*}
	\begin{aligned}
		\det\mathbb{B}(1,2)&=\mu_{14}\mu_{23}+\mu_{14}\mu_{34}+\mu_{23}\mu_{34}\\
		\det\mathbb{B}(1,3)&=\mu_{12}\mu_{14}+\mu_{12}\mu_{34}+\mu_{14}\mu_{23}+\mu_{23}\mu_{34}\\
		\det\mathbb{B}(1,4)&=\mu_{12}\mu_{23}+\mu_{12}\mu_{34}+\mu_{23}\mu_{34}\\
		\det\mathbb{B}(2,3)&=\mu_{12}\mu_{14}+\mu_{12}\mu_{34}+\mu_{14}\mu_{34}\\
		\det\mathbb{B}(2,4)&=\mu_{12}\mu_{23}+\mu_{12}\mu_{34}+\mu_{14}\mu_{23}+\mu_{14}\mu_{34}\\
		\det\mathbb{B}(3,4)&=\mu_{12}\mu_{14}+\mu_{12}\mu_{23}+\mu_{14}\mu_{23}
	\end{aligned}
\end{equation*}
Since $\det\mathbb{B}(i)$ is independent of $i$, the drift velocity is the average of the speeds
\begin{equation*}
	\mathbf{v}_{{}_{\textrm{drift}}}=\frac{1}{4}\bigl(\mathbf{v}^1+\mathbf{v}^2+\mathbf{v}^3+\mathbf{v}^4\bigr).
\end{equation*}
Then, assuming $\mathbf{v}_{{}_{\textrm{drift}}}=0$, the expression for the diffusion matrix can be rearranged 
by collecting the term multiplied by the same product of transition rates, obtaining
\begin{equation*}
	\begin{aligned}
	\mathbb{D}&=-\frac{2}{\det\mathbb{B}(i)}\Bigl\{
			\mu_{23}\mu_{34}\bigl((\mathbf{v}^2+\mathbf{v}^3+\mathbf{v}^4)\otimes \mathbf{v}^1\bigr)^*
			+\mu_{14}\mu_{34}\bigl((\mathbf{v}^1+\mathbf{v}^3+\mathbf{v}^4)\otimes \mathbf{v}^2\bigr)^*\\
		&\qquad\qquad 
			+\mu_{12}\mu_{14}\bigl((\mathbf{v}^1+\mathbf{v}^2+\mathbf{v}^4)\otimes \mathbf{v}^3\bigr)^*
			+\mu_{12}\mu_{23}\bigl((\mathbf{v}^1+\mathbf{v}^2+\mathbf{v}^3)\otimes \mathbf{v}^4\bigr)^*\\
		&\qquad\qquad
			+\mu_{12}\mu_{34}\bigl((\mathbf{v}^1+\mathbf{v}^2)\otimes(\mathbf{v}^3+\mathbf{v}^4)\bigr)^*
			+\mu_{14}\mu_{23}\bigl((\mathbf{v}^1+\mathbf{v}^4)\otimes(\mathbf{v}^2+\mathbf{v}^3)\bigr)^*\Bigr\}.
	\end{aligned}
\end{equation*}
Since $\mathbf{v}^1+\mathbf{v}^2+\mathbf{v}^3+\mathbf{v}^4=0$, the matrix $\mathbb{D}$ can be rewritten as
\begin{equation*}
	\begin{aligned}
	\mathbb{D}&=\frac{2}{\det\mathbb{B}(i)}\Bigl\{
			\mu_{23}\mu_{34} (\mathbf{v}^1\otimes \mathbf{v}^1)
				+\mu_{14}\mu_{34} (\mathbf{v}^2\otimes \mathbf{v}^2)
					+\mu_{12}\mu_{14}(\mathbf{v}^3\otimes \mathbf{v}^3)
						+\mu_{12}\mu_{23}(\mathbf{v}^4\otimes \mathbf{v}^4)\\
		&\qquad	+\mu_{12}\mu_{34}\bigl((\mathbf{v}^1+\mathbf{v}^2)\otimes(\mathbf{v}^1+\mathbf{v}^2)\bigr)
			+\mu_{14}\mu_{23}\bigl((\mathbf{v}^1+\mathbf{v}^4)\otimes(\mathbf{v}^1+\mathbf{v}^4)\bigr)\Bigr\}.
	\end{aligned}
\end{equation*}
showing, in particular, that the diffusion matrix is non-negative definite.
\end{example}

\begin{example}\label{ex:fourtree} \normalfont
As a final example, let us consider a case with 5 velocities and admissible transitions
only between $\mathbf{v}^1$ and $\mathbf{v}^j$ for $j=2,3,4,5$.
The transition matrix is
\begin{equation*}
 	\mathbb{B}=\left(\begin{array}{ccccc}  
			\ast		& -\mu_{12}	& -\mu_{13}	& -\mu_{14} 	& -\mu_{15} 	\\
			-\mu_{12}	& \ast           	& 0			& 0 			& 0			\\
			-\mu_{13}	& 0			& \ast           	& 0		 	& 0			\\
			-\mu_{14} & 0			& 0			&\ast			& 0			\\
			-\mu_{15} & 0			& 0			&0			& \ast		\\
			\end{array}\right).
\end{equation*}
where the diagonal entries are the sums of the column element changed by sign.
A direct computation shows that $\det\mathbb{B}(i)=\mu_{12}\mu_{13}\mu_{14}\mu_{15}$
for any $i\in\{1,2,3,4,5\}$.
The second order principal minors are
\begin{equation*}
	\begin{aligned}
		\det\mathbb{B}(1,2)&=\mu_{13}\mu_{14}\mu_{15}
			&\qquad	 	\det\mathbb{B}(1,3)&=\mu_{12}\mu_{14}\mu_{15}\\
		\det\mathbb{B}(1,4)&=\mu_{12}\mu_{13}\mu_{15}
			&\qquad		\det\mathbb{B}(1,5)&=\mu_{12}\mu_{13}\mu_{14}\\
		\det\mathbb{B}(2,3)&=\mu_{12}\mu_{14}\mu_{15}+\mu_{13}\mu_{14}\mu_{15}
			&\quad	\det\mathbb{B}(2,4)&=\mu_{12}\mu_{13}\mu_{15}+\mu_{13}\mu_{14}\mu_{15}\\
		\det\mathbb{B}(2,5)&=\mu_{12}\mu_{13}\mu_{14}+\mu_{13}\mu_{14}\mu_{15}
			&\quad	\det\mathbb{B}(3,4)&=\mu_{12}\mu_{13}\mu_{15}+\mu_{12}\mu_{14}\mu_{15}\\
		\det\mathbb{B}(3,5)&=\mu_{12}\mu_{13}\mu_{14}+\mu_{12}\mu_{14}\mu_{15}
			&\quad	\det\mathbb{B}(4,5)&=\mu_{12}\mu_{13}\mu_{14}+\mu_{12}\mu_{13}\mu_{15}.
	\end{aligned}
\end{equation*}
As in the previous cases, being $\det\mathbb{B}(i)$ is independent of $i$,
the drift velocity is the average of the speeds.
Then, setting $\mathbf{v}_{{}_{\textrm{drift}}}=0$, the diffusion matrix turns to be 
\begin{equation*}
		\mathbb{D}=\frac{2}{\mu_{12}}(\mathbf{v}^2\otimes\mathbf{v}^2)
				+\frac{2}{\mu_{13}}(\mathbf{v}^3\otimes\mathbf{v}^3)
				+\frac{2}{\mu_{14}}(\mathbf{v}^4\otimes\mathbf{v}^4)
				+\frac{2}{\mu_{15}}(\mathbf{v}^5\otimes\mathbf{v}^5).
\end{equation*}
Note, in this case, the absence of the dependency of $\mathbb{D}$ on the velocity $\mathbf{v}^1$.
\end{example}

From all the examples, in the case of symmetric transition matrix $\mathbb{B}$, two
specific features come evident: firstly, the drift speed is the average of the elements in
the speed set $\{\mathbf{v}^1,\dots\mathbf{v}^n\}$; secondly, the diffusion matrix is described
as a linear combination with non-negative coefficients of tensor of the form $w\otimes w$.
This latter property, in particular, implies that the matrix $\mathbb{D}$ is non-negative definite.

In what follows, we show that the same hallmark is shared by any system \eqref{discrkin}
if the matrix $\mathbb{B}$ is symmetric as a consequence of the special structure of its
principal minors.

\begin{proposition}\label{prop:symdrift}
If the transition matrix $\mathbb{B}$ is symmetric, then $\det\mathbb{B}(i)=\det \mathbb{B}(j)$ 
for any $i,j\in\{1,\dots,n\}$.
\end{proposition}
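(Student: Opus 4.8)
The plan is to read off $\det\mathbb{B}(i)$ as a diagonal entry of the adjugate matrix of $\mathbb{B}$ and then to exploit the two structural features of $\mathbb{B}$ — its singularity together with its symmetry — to force all such diagonal entries to coincide. Since $\mathbb{B}(i)$ is obtained by deleting the $i$-th row and the $i$-th column, the $(i,i)$ entry of the adjugate $\mathrm{adj}\,\mathbb{B}$ is, by definition, the $(i,i)$ cofactor $(-1)^{2i}\det\mathbb{B}(i)=\det\mathbb{B}(i)$. Hence the assertion is equivalent to the statement that $\mathrm{adj}\,\mathbb{B}$ has constant diagonal.

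First I would invoke the defining identity of the adjugate,
\begin{equation*}
	\mathbb{B}\,(\mathrm{adj}\,\mathbb{B})=(\mathrm{adj}\,\mathbb{B})\,\mathbb{B}=\det(\mathbb{B})\,\mathbb{I}.
\end{equation*}
Because the columns of $\mathbb{B}$ sum to zero, $\det\mathbb{B}=0$, so the right-hand side vanishes and every column of $\mathrm{adj}\,\mathbb{B}$ lies in $\ker\mathbb{B}$. Under the standing irreducibility assumption, $\ker\mathbb{B}$ is one-dimensional and generated by $\mathbf{1}=(1,\dots,1)$ — a fact already recorded in the proof of Proposition~\ref{prop:sk}. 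Consequently each column of $\mathrm{adj}\,\mathbb{B}$ is a scalar multiple of $\mathbf{1}$; writing the $j$-th column as $c_j\,\mathbf{1}$ yields $(\mathrm{adj}\,\mathbb{B})_{ij}=c_j$ for every $i$.

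It then remains to show that all the $c_j$ coincide, and this is exactly where symmetry enters. Since $\mathbb{B}$ is symmetric, so is $\mathrm{adj}\,\mathbb{B}$, whence $(\mathrm{adj}\,\mathbb{B})_{ij}=(\mathrm{adj}\,\mathbb{B})_{ji}$, that is $c_j=c_i$ for all $i,j$. Therefore $\mathrm{adj}\,\mathbb{B}=c\,\mathbf{1}\mathbf{1}^{\top}$ for a single constant $c$, all entries are equal, and in particular $\det\mathbb{B}(i)=c$ independently of $i$, which is the claim. (The constant is nonzero precisely because $I_1(\mathbb{B})=\sum_i\det\mathbb{B}(i)=nc\neq 0$ under irreducibility; should one wish to drop that hypothesis, in the reducible case $\mathbb{B}$ has rank at most $n-2$, every $(n-1)\times(n-1)$ minor vanishes, and the common value is simply $0$.)

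I expect the only genuinely delicate point to be the identification $\ker\mathbb{B}=\linspan\{\mathbf{1}\}$ together with the ensuing passage from ``each column is a multiple of $\mathbf{1}$'' to ``the multiples are all equal'', the latter being precisely the step that cannot be made without symmetry; everything else is formal adjugate calculus. As an alternative phrased in the determinantal language of the preceding propositions, one could instead start from $\det\mathbb{B}(i)=B_1\land\cdots\land E_i\land\cdots\land B_n$ and apply multilinearity together with the column relation $\sum_k B_k=\mathbf{0}$ and its row analogue (again symmetry); the repeated-column terms cancel and one is left comparing two single wedges. That route reaches the same conclusion but requires careful sign and cancellation bookkeeping, which is why I would favour the adjugate argument above.
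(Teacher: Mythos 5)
Your proof is correct, and it takes a genuinely different route from the paper's. The paper argues by bare-hands determinant manipulation: starting from $\det\mathbb{B}(1)$, it replaces the first row of the minor by the sum of all rows (exploiting the zero column sums of $\mathbb{B}$), invokes symmetry, and then does the analogous column operation, arriving after a short chain of equalities at $\det\mathbb{B}(2)$; this computation uses nothing beyond symmetry and the zero-sum structure, so it needs no irreducibility hypothesis and no case distinction. Your adjugate argument, by contrast, leans on the identification $\ker\mathbb{B}=\linspan\{\mathbf{1}\}$, which requires the standing irreducibility assumption, and you are right that this forces the rank dichotomy you supply parenthetically: in the symmetric setting with nonnegative weights, reducibility means the associated graph is disconnected, the kernel then has dimension at least two (one indicator vector per component), every $(n-1)\times(n-1)$ minor vanishes, and the adjugate is identically zero, so the common value is $0$ — that parenthetical is essential to match the paper's unconditional statement, and you handle it correctly. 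What your route buys in exchange is a strictly stronger conclusion delivered in one stroke: $\mathrm{adj}\,\mathbb{B}=c\,\mathbf{1}\mathbf{1}^{\top}$, i.e.\ \emph{all} cofactors of $\mathbb{B}$ coincide, not merely the diagonal ones — which is precisely the classical ``all cofactors equal the tree weight'' form of the Kirchhoff theorem that the paper later imports from \cite{Chai82}, so your argument also makes the constancy in Theorem \ref{thm:allminors} (case $k=1$) conceptually transparent, whereas the paper's computation, though more elementary and hypothesis-free, establishes only the diagonal identity it needs.
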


\begin{proof}
Substituting the first row with the sum of all the rows, using symmetry and, then, the first column with 
the sum of all the columns
\begin{equation*}
	\det \mathbb{B}(1)=\det\begin{pmatrix} \sum\limits_{j\neq 2} \mu_{j2} & -\mu_{23} & \dots \\
 					-\mu_{32} & \sum\limits_{j\neq 3} \mu_{j3} & \dots \\
 					\vdots & \vdots & \ddots \end{pmatrix}
		=\det\begin{pmatrix} \mu_{12} & \mu_{13} & \dots \\
 					-\mu_{32} & \sum\limits_{j\neq 3} \mu_{j3} & \dots \\
 					\vdots & \vdots & \ddots  \end{pmatrix} \\
\end{equation*}\begin{equation*}
		=\det\begin{pmatrix} \mu_{12} & \mu_{13} & \dots \\
 		 		-\mu_{23} & \sum\limits_{j\neq 3} \mu_{j3} & \dots \\
 				\vdots & \vdots & \ddots \end{pmatrix}
		=\det\begin{pmatrix} \sum\limits_{j\neq 1}\mu_{1j} & \mu_{13} & \dots \\
 				\mu_{13} & \sum\limits_{j\neq 3} \mu_{j3} & \dots \\
 				\vdots & \vdots & \ddots \end{pmatrix}
\end{equation*}\begin{equation*}
		=-\det\begin{pmatrix} -\sum\limits_{j\neq 1}\mu_{1j} & -\mu_{13} & \dots \\
 				\mu_{13} & \sum\limits_{j\neq 3} \mu_{j3} & \dots \\
 		 		\vdots & \vdots & \ddots \end{pmatrix}
		=\det\begin{pmatrix} \sum\limits_{j\neq 1}\mu_{1j} & -\mu_{13} & \dots \\
 		 		-\mu_{13} & \sum\limits_{j\neq 3} \mu_{j3} & \dots \\
 				\vdots & \vdots & \ddots \end{pmatrix} 
			= \det \mathbb{B}(2).
\end{equation*}
The proof is complete.
\end{proof}

As a consequence of the independency of the principal minors of order 1 of the matrix $\mathbb{B}$,
as suggested by the previous examples, the form of the drift velocity becomes particularly simple.

\begin{theorem}\label{thm:drift}
If the transition matrix $\mathbb{B}$ is symmetric, there holds
\begin{equation}\label{driftsym}
	\mathbf{v}_{{}_{\textrm{drift}}}=\frac{1}{n}\sum_{i=1}^n \mathbf{v}^i.
\end{equation}
\end{theorem}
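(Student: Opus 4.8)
The plan is to read off the result directly from the definition of the drift velocity in \eqref{drift}, using the equality of the first-order principal minors established in Proposition \ref{prop:symdrift} as the single substantive input. Recall that by \eqref{drift} the $j$-th component of $\mathbf{v}_{{}_{\textrm{drift}}}$ is
\begin{equation*}
	\bigl(\mathbf{v}_{{}_{\textrm{drift}}}\bigr)_j
		=\frac{1}{I_1(\mathbb{B})}\sum_{i=1}^n \det\mathbb{B}(i)\,v^i_j,
	\qquad\qquad I_1(\mathbb{B})=\sum_{i=1}^n \det\mathbb{B}(i).
\end{equation*}
The only thing standing between this formula and the claimed average is the weighting of each velocity $\mathbf{v}^i$ by its cofactor $\det\mathbb{B}(i)$; if those weights were all equal, they would factor out and cancel against the analogous sum in $I_1(\mathbb{B})$.

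Concretely, I would invoke Proposition \ref{prop:symdrift}, which asserts precisely that for symmetric $\mathbb{B}$ one has $\det\mathbb{B}(i)=\det\mathbb{B}(k)$ for all $i,k$. Denoting this common value by $m$, I would substitute to obtain $I_1(\mathbb{B})=n\,m$ in the denominator and $\sum_{i=1}^n \det\mathbb{B}(i)\,v^i_j=m\sum_{i=1}^n v^i_j$ in the numerator. Since $m\neq 0$ (the irreducibility assumption already in force guarantees $I_1(\mathbb{B})\neq 0$, hence $m\neq 0$), the factor $m$ cancels and I am left with
\begin{equation*}
	\bigl(\mathbf{v}_{{}_{\textrm{drift}}}\bigr)_j=\frac{m}{n\,m}\sum_{i=1}^n v^i_j
		=\frac{1}{n}\sum_{i=1}^n v^i_j.
\end{equation*}
As this holds for every coordinate $j=1,\dots,d$, the vector identity \eqref{driftsym} follows at once.

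In truth there is no real obstacle left at the level of this theorem: the entire difficulty has been absorbed into Proposition \ref{prop:symdrift}, whose row/column manipulations using the symmetry $\mu_{ij}=\mu_{ji}$ and the column-sum structure of $\mathbb{B}$ are what make the cofactors coincide. Once that combinatorial fact is granted, the present statement is a one-line cancellation. I would therefore keep the proof short, citing Proposition \ref{prop:symdrift} for the equality of the minors and performing only the elementary substitution above; the only point worth flagging explicitly is the nonvanishing of the common minor $m$, which is secured by the standing irreducibility hypothesis $I_1(\mathbb{B})\neq 0$.
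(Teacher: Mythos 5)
Your proof is correct and coincides with the paper's own argument: both reduce the statement to Proposition \ref{prop:symdrift}, substitute $I_1(\mathbb{B})=n\det\mathbb{B}(i)$ into the definition \eqref{drift}, and cancel the common minor componentwise. Your explicit remark that the common value $m$ is nonzero by the standing irreducibility hypothesis is a small but welcome clarification that the paper leaves implicit.
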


\begin{proof} The proof is straightforward.
Thanks to Proposition \ref{prop:symdrift}, $\det\mathbb{B}(i)$ is independent on $i$
and thus $I_1(\mathbb{B})=n\det\mathbb{B}(i)$.
Then, from \eqref{drift} follows
\begin{equation*}
	(\mathbf{v}_{{}_{\textrm{drift}}})_{j}
	=\frac{\det\mathbb{B}(i)}{n\det\mathbb{B}(i)}
		\bigl(\mathbf{1}\cdot(v^1_j,\dots,v^n_j)\bigr)
	=\frac{1}{n}\sum_{i=1}^n (\mathbf{v}^i)_{j}
\end{equation*}
where $\mathbf{1}=(1,\dots,1)$.
\end{proof}

In order to get a deeper understanding on the structure on the diffusion matrix $\mathbb{D}$
relative to \eqref{discrkin}, it is needed a more precise understanding of the form of
the principal minors of the transition matrix $\mathbb{B}$.
In this direction, a fundamental tools is provided by a generalization, proved in \cite{Chai82},
of the {\bf Kirchoff's matrix tree Theorem}, a well-known result in graph theory.
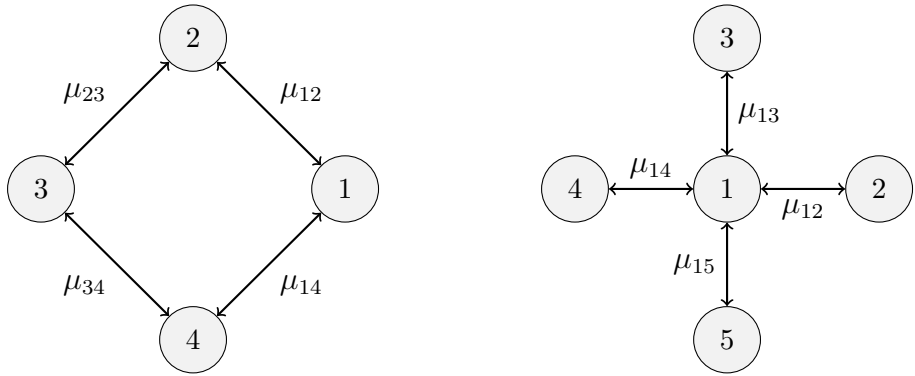
\begin{figure}[hbt] 
\begin{tikzpicture}[scale=2.0, auto, swap]
    \foreach \pos/\name in {{(1,0)/1}, {(0,1)/2}, {(-1,0)/3},{(0,-1)/4}}
        \node[vertex] (\name) at \pos {$\name$};
    \foreach \source/ \dest /\weight in {1/2/\mu_{12}, 2/3/\mu_{23},3/4/\mu_{34},4/1/\mu_{14}}
        \path[edge] (\source) -- node[weight] {$\weight$} (\dest);
 \end{tikzpicture}
\hskip2cm
\begin{tikzpicture}[scale=2.0, auto, swap]
    \foreach \pos/\name in {{(0,0)/1}, {(1,0)/2}, {(0,1)/3}, {(-1,0)/4},{(0,-1)/5}}
        \node[vertex] (\name) at \pos {$\name$};
    \foreach \source/ \dest /\weight in {1/2/\mu_{12}, 1/3/\mu_{13},1/4/\mu_{14},1/5/\mu_{15}}
        \path[edge] (\source) -- node[weight] {$\weight$} (\dest);
 \end{tikzpicture}
\caption{\footnotesize The graph relative to the Examples \ref{ex:fourside}--\ref{ex:fourtree}.}
\end{figure}

In its original version, this result affirms that the number of spanning trees of a given 
graph coincides with the determinant of an appropriate matrix associated to the graph.
Here, we use this fascinating connection the other way round: given the transition
matrix $\mathbb{B}$, we consider a corresponding graph and determine the values
of its minors by means of trees contained in the graph.
Indeed, the velocity changes dictated by the transition matrix $\mathbb{B}$ can be equivalently
represented by means of a directed graph, usually shortly called a {\bf digraph}, whose vertices
are the speeds $\mathbf{v}^i$, and with arcs from the $i$-th to the $j$-th node weighted by the 
transition rate $\mu_{ij}$.
The symmetry assumption on $\mathbb{B}$ translates into the fact that the graph is actually
undirected with weights given by the common values of the transition rates.
In this respect, Example \ref{ex:three} correspond to the graph depicted in Figure 1 
and Examples \ref{ex:fourside}--\ref{ex:fourtree} to the graphs in Figure 2.
Also, irreducibility of the matrix $\mathbb{B}$ is equivalent to the connectedness of the associated graph.

The collection of velocities $\mathbf{v}^i$ with arcs weighted by the rate $\mu_{ij}$
will be referred to as the {\bf graph associated to \eqref{discrkin}}.
Let us stress that the graph representation of the vertex is not directly related
with the effective value of the velocity $\mathbf{v}^i$ as vector in $\R^d$.
The content of the graph is only illustrative on the admissibile speed transitions.

To proceed, let us also recall that a {\bf tree} is an undirected graph in which any two
vertices are connected by exactly one simple path.
Equivalently, a tree is a connected graph without simple cycles.
A {\bf forest} is a disjoint union of trees.
Finally, given a forest $F$ in the graph $\Gamma$, we denote by $\mu_{F}$ the product
of the weights of all the arcs in $F$ and we call it {\bf weight of the forest}.
By definition, if $F$ is composed by a single vertex, its weight is equal to 1.

We are ready to state a (facilitated) version of the {\it (All minors) matrix tree theorem}
proved in \cite{Chai82}.

\begin{theorem}\label{thm:allminors}
Let $\Gamma=(\mathbf{v}^i,\mu_{ij})$ be the graph associated to \eqref{discrkin}.
Let the transition matrix $\mathbb{B}$ be symmetric and let $\mathcal{I}=\{i_1<\dots<i_k\}$ be a set
of indeces in $\{1,\dots,n\}$. Then
\begin{equation*}
	\det \mathbb{B}(\mathcal{I})=\sum_{F\in \mathcal{F}} \mu_{F}
\end{equation*}
where $\mathcal{F}$ is the set of forests $F$ in $\Gamma$ such that\par
{\bf i.} all the verteces in $\Gamma$ are contained in $F$;\par
{\bf ii.} $F$ contains exactly $k$ trees;\par
{\bf iii.} each tree in $F$ contains exactly one vertex $\mathbf{v}^i$ with $i\in\mathcal{I}$.
\end{theorem}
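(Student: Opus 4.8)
The plan is to prove the all-minors matrix tree theorem for the symmetric transition matrix $\mathbb{B}$ by recognizing $\mathbb{B}$ as (a variant of) the weighted graph Laplacian and then reducing the statement to the classical Kirchhoff theorem via a determinant expansion. Recall that $\mathbb{B}=(-\mu_{ij})$ off-diagonal with $\mathbb{B}_{ii}=\sum_{j\neq i}\mu_{ij}$, which is exactly the Laplacian $L(\Gamma)$ of the undirected weighted graph $\Gamma$. The minor $\det\mathbb{B}(\mathcal{I})$ is, by the definition recalled in the excerpt, the determinant of the submatrix of $L(\Gamma)$ obtained by deleting the rows and columns indexed by $\mathcal{I}=\{i_1<\dots<i_k\}$. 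Thus the target identity reads: the principal minor of the Laplacian obtained by deleting the index set $\mathcal{I}$ equals the sum over all spanning forests $F$ with exactly $k$ trees, each tree containing exactly one of the deleted indices, of the forest weight $\mu_F$.

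First I would treat the base case $k=1$, which is precisely the classical (weighted) Kirchhoff matrix tree theorem: $\det\mathbb{B}(i)$ equals the sum of weights of all spanning trees of $\Gamma$, and every spanning tree trivially contains the single vertex $\mathbf{v}^i$. I would cite Kirchhoff's theorem in its weighted form as the known input here, or alternatively derive it from the Cauchy--Binet formula applied to a weighted incidence matrix factorization $L(\Gamma)=N W N^{\top}$, where $N$ is the (oriented) incidence matrix and $W=\diag(\mu_e)$. This factorization is the engine: deleting row $i$ from $N$ yields a reduced incidence matrix $N_{\hat\imath}$, and Cauchy--Binet expands $\det(N_{\hat\imath} W N_{\hat\imath}^{\top})$ as a sum over size-$(n-1)$ sets of edges, each term being nonzero exactly when the chosen edges form a spanning tree, contributing $\pm\det(\cdot)^2 \prod_e \mu_e = \mu_F$. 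The positivity of each term (squared minors of a totally-unimodular-type incidence matrix equal $1$) is what makes all contributions come with sign $+1$.

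The inductive/general step for $k>1$ is the heart of the matter. Here I would again invoke Cauchy--Binet, now deleting the $k$ rows and columns indexed by $\mathcal{I}$. Writing $N_{\hat{\mathcal{I}}}$ for the incidence matrix with those $k$ rows removed, we have $\mathbb{B}(\mathcal{I})=N_{\hat{\mathcal{I}}} W N_{\hat{\mathcal{I}}}^{\top}$, an $(n-k)\times(n-k)$ matrix. Cauchy--Binet expands its determinant as a sum over subsets $S$ of edges with $|S|=n-k$; a standard combinatorial lemma identifies exactly which $S$ give a nonzero squared minor of the reduced incidence matrix, namely those edge sets that form a spanning forest of $\Gamma$ with exactly $k$ components, where the $k$ deleted vertices lie in distinct components (conditions i--iii). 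For such $S$ the contribution is again $\prod_{e\in S}\mu_e = \mu_F$ with a $+1$ sign, while all other subsets yield a singular reduced minor and contribute zero. Summing gives $\det\mathbb{B}(\mathcal{I})=\sum_{F\in\mathcal{F}}\mu_F$.

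The main obstacle I anticipate is the precise combinatorial bookkeeping in the general step: verifying that a size-$(n-k)$ edge set yields a nonsingular maximal minor of $N_{\hat{\mathcal{I}}}$ precisely under conditions i--iii, and that such minors are $\pm1$ so that the squares are $1$. The cleanest route is to prove the auxiliary claim that, for a reduced incidence matrix obtained by deleting a set $\mathcal{I}$ of $k$ rows, the columns indexed by an edge set $S$ with $|S|=n-k$ are linearly independent if and only if $S$ is a spanning forest with exactly $k$ trees each containing exactly one vertex of $\mathcal{I}$; this is a rank computation on the incidence matrix of each connected component, using that an oriented tree on $m$ vertices has incidence-matrix rank $m-1$ and that deleting one vertex-row per component restores full rank. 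Granting this rank lemma, the rest is the mechanical Cauchy--Binet sum, and the symmetry of $\mathbb{B}$ guarantees the undirected interpretation so that forest weights are unambiguously the products of edge weights $\mu_{ij}=\mu_{ji}$.
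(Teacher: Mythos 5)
Your proposal is correct, but it is worth noting that the paper does not actually prove this statement at all: Theorem \ref{thm:allminors} is presented as a ``facilitated version'' of the all-minors matrix tree theorem and the proof is delegated entirely to Chaiken's paper \cite{Chai82}, whose argument is combinatorial (a direct expansion of the determinant with a sign-reversing cancellation of non-forest terms), valid for arbitrary, possibly non-symmetric weights and for minors with distinct row and column index sets. Your route --- factor the weighted Laplacian as $\mathbb{B}=NWN^{\top}$ with $N$ the oriented incidence matrix and $W=\diag(\mu_e)$, observe $\mathbb{B}(\mathcal{I})=N_{\hat{\mathcal{I}}}WN_{\hat{\mathcal{I}}}^{\top}$, and apply Cauchy--Binet together with the rank lemma identifying the nonvanishing maximal minors of $N_{\hat{\mathcal{I}}}$ with spanning forests satisfying conditions {\bf i}--{\bf iii} --- is a complete and self-contained proof of exactly the statement needed, and the two points you flag as requiring care (that the admissible edge sets are precisely those forests, and that the corresponding minors are $\pm1$ so every contribution enters with weight $\mu_F$ and sign $+1$) are handled correctly by your component-by-component rank computation: an edge set with a cycle gives dependent columns; a forest block whose component misses $\mathcal{I}$ forces another component to contain two deleted vertices, hence a block with more columns than rows; and a tree's incidence matrix with one row deleted has determinant $\pm1$. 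What each approach buys: your argument is shorter and exploits the symmetry hypothesis in an essential way (it is what makes the Cauchy--Binet terms perfect squares, guaranteeing positivity without any sign bookkeeping), so it covers precisely the principal-minor, symmetric case stated here; Chaiken's theorem, which the paper invokes, is strictly more general and would also cover the non-symmetric transition rates that the paper explicitly defers to future work. Your proof would thus make the paper self-contained for its present purposes, at the cost of not extending beyond the symmetric setting.
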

\begin{figure}[hbt] 
\begin{tikzpicture}[scale=2.75, auto, swap]
    \foreach \pos/\name in {{(1,0)/1}, {(0.71,0.71)/2}, {(0,1)/3},{(-0.71,0.71)/4},
    					{(-1,0)/5}, {(-0.71,-0.71)/6}, {(0,-1)/7},{(0.71,-0.71)/8}}
        \node[vertex] (\name) at \pos {$\name$};
    \foreach \source/ \dest /\weight in {1/2/\mu_{12}, 2/3/\mu_{23},3/4/\mu_{34},4/5/\mu_{45},
    							5/6/\mu_{56}, 6/7/\mu_{67},7/8/\mu_{78},8/1/\mu_{18}}
        \path[edge] (\source) -- node[weight] {$\weight$} (\dest);
 \end{tikzpicture}
 \caption{\footnotesize The graph relative to the Example \ref{ex:tour} in the case $n=8$.}
\end{figure}

Formulas \eqref{drift} and \eqref{diffmatrix} can be now re-interpreted taking advantage
of Theorem \ref{thm:allminors} in the cases $k=1$ and $k=2$.
Indeed, to compute the value $\det\mathbb{B}(i)$ (which is independent on $i$ thanks
to Proposition \ref{prop:symdrift}) it is sufficient to consider all the trees composed by 
all the verteces $\mathbf{v}^i$ and contained in the graph associated to \eqref{discrkin}
and to compute the sum of the weights of such trees.

\begin{example}\label{ex:tour} \normalfont
As an explicative example, let us consider the case of $n$ velocities $\mathbf{v}^i$ 
with rate $\mu_{ij}$ symmetric and positive if and only if, for $i<j$, either $j=i+1$
or $i=1$ and $j=n$ (see Figure 3).
The graph associated to such choice is a cycle and all of its trees containing all of the $n$
vertices are obtained by removing a single arc. Thus
\begin{equation*}
	\det\mathbb{B}(i)=\sum_{j=1}^{n}\frac{\mu_{12}\mu_{23}\dots\mu_{n-1,n}\mu_{n1}}{\mu_{j,j+1}}
\end{equation*}
where $\mu_{n,n+1}=\mu_{n1}$.
\end{example}

\begin{example}\label{ex:tree} \normalfont
Somewhat oppositely with respect to Example \ref{ex:tour}, we can consider a matrix
$\mathbb{B}$ such that transitions are possibile only from and towards a given specific
speed, say $\mathbf{v}^1$, i.e. $\mu_{ij}>0$ for $i<j$ if and only if $i=1$ (see Figure 4).
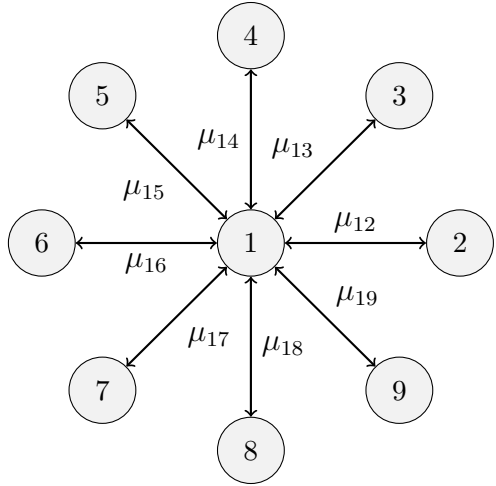
\begin{figure}[hbt] 
\begin{tikzpicture}[scale=2.75, auto]
    \foreach \pos/\name in {{(0,0)/1}, {(1,0)/2}, {(0.71,0.71)/3}, {(0,1)/4},{(-0.71,0.71)/5},
    					{(-1,0)/6}, {(-0.71,-0.71)/7}, {(0,-1)/8},{(0.71,-0.71)/9}}
        \node[vertex] (\name) at \pos {$\name$};
    \foreach \source/ \dest /\weight in {1/2/\mu_{12}, 1/3/\mu_{13},1/4/\mu_{14},1/5/\mu_{15},
    							1/6/\mu_{16}, 1/7/\mu_{17},1/8/\mu_{18},1/9/\mu_{19}}
        \path[edge] (\source) -- node[weight] {$\weight$} (\dest);
 \end{tikzpicture}
\caption{\footnotesize The graph relative to the Example \ref{ex:tour} in the case $n=9$.}
\end{figure}
The graph is in this case a tree. 
Hence, $\det\mathbb{B}(i)$ is the product of all the rates $\mu_{1j}$ with $j\in\{2,\dots,n\}$.
\end{example}

Next, we take advantage of Theorem \ref{thm:allminors} to give a different representation
of the diffusion matrix $\mathbb{D}$ defined in \eqref{diffmatrix}.
Given the graph $\Gamma$ associated to \eqref{discrkin}, we set
\begin{equation*}
	\mathcal{F}_2:=\{\textrm{forests $F$ partitioning the vertex of $\Gamma$ into two trees}\}.
\end{equation*}
Any element $F$ in $\mathcal{F}_2$ is composed by two trees, $T^1$ and $T^2$,
ordered according to the size of the smallest node in each tree.
Additionaly, given indeces $i,j$, with $i<j$, we use the notation
\begin{equation*}
	\mathcal{F}_2(i,j):=\{F=\{T^1,T^2\}\in\mathcal{F}_2\,:\,
		\textrm{$i$ and $j$ are verteces of $T^1$ and $T^2$, respectively}\}.
\end{equation*}
Then, given indeces $i$ and $j$, there holds
\begin{equation*}
	\det\mathbb{B}(i,j)=\sum_{\mathcal{F}_2(i,j)}\mu_{{}_{T^1}}\mu_{{}_{T^2}}.
\end{equation*}
and thus, for $\mathbf{v}_{{}_{\textrm{drift}}}=0$,
\begin{equation*}
	\mathbb{D}=-\frac{2}{I_1(\mathbb{B})}
		\sum_{h<k} \sum_{\mathcal{F}_2(h,k)}\mu_{{}_{T^1}}\mu_{{}_{T^2}}
		\,(\mathbf{v}^h\otimes \mathbf{v}^k)^*.
\end{equation*}
Reversing the order of the sums, we infer
\begin{equation*}
	\begin{aligned}
	\mathbb{D}&=-\frac{2}{I_1(\mathbb{B})}\sum_{\mathcal{F}_2}  \mu_{{}_{T^1}}\mu_{{}_{T^2}}
		\sum_{h\in T^1} \sum_{k\in T^2}  (\mathbf{v}^h\otimes \mathbf{v}^k)^*\\
			&=-\frac{2}{I_1(\mathbb{B})}\sum_{\mathcal{F}_2}  \mu_{{}_{T^1}}\mu_{{}_{T^2}}
		   \left(\sum_{h\in T^1}\mathbf{v}^h\otimes \sum_{k\in T^2}\mathbf{v}^k\right)^*
	\end{aligned}
\end{equation*}
Thanks to Proposition \ref{prop:symdrift}, $\mathbf{v}_{{}_{\textrm{drift}}}=0$ if and only if
the sum of the velocities $\mathbf{v}^i$ is null.
As a consequence, we obtain the proof of the following statement, the more intriguing
contribution of the present paper, giving an explicit formula for the diffusion matrix in
the symmetric setting.

\begin{theorem}\label{thm:finaldiff}
Let the transition matrix $\mathbb{B}$ be symmetric. Then, denoting by $\mathbf{w}(T)$
the sum of the velocities $\mathbf{v}^i$ in a given tree $T$ of the graph associated to \eqref{discrkin},
the diffusion matrix relative to \eqref{discrkin} is 
\begin{equation}\label{finaldiff}
	\mathbb{D}=\frac{2}{I_1(\mathbb{B})}\sum_{\mathcal{F}_2}  \mu_{{}_{T^1}}\mu_{{}_{T^2}}
		   \left(\mathbf{w}(T_1)\otimes \mathbf{w}(T_1)\right).
\end{equation}
In particular, the matrix $\mathbb{D}$ is non-negative definite.
\end{theorem}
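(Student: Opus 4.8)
The plan is to start from the rearranged expression for $\mathbb{D}$ obtained immediately above the statement, namely
\begin{equation*}
	\mathbb{D}=-\frac{2}{I_1(\mathbb{B})}\sum_{\mathcal{F}_2}\mu_{{}_{T^1}}\mu_{{}_{T^2}}
		\left(\mathbf{w}(T^1)\otimes \mathbf{w}(T^2)\right)^*,
\end{equation*}
where $F=\{T^1,T^2\}$ runs over the forests in $\mathcal{F}_2$ and $\mathbf{w}(T)=\sum_{i\in T}\mathbf{v}^i$. This reduction is already in hand: it combines the definition \eqref{diffmatrix} of $\mathbb{D}$ with the case $k=2$ of Theorem \ref{thm:allminors}, followed by the interchange of the two summations. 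So the real content of the proof is to simplify the summand.

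The decisive observation is that, under the standing normalization $\mathbf{v}_{{}_{\textrm{drift}}}=\mathbf{0}$, Theorem \ref{thm:drift} (equivalently Proposition \ref{prop:symdrift}) gives $\sum_{i=1}^n \mathbf{v}^i=\mathbf{0}$. Since every $F\in\mathcal{F}_2$ partitions \emph{all} the vertices into the two trees $T^1$ and $T^2$, this forces
\begin{equation*}
	\mathbf{w}(T^1)+\mathbf{w}(T^2)=\sum_{i=1}^n \mathbf{v}^i=\mathbf{0},
\end{equation*}
whence $\mathbf{w}(T^2)=-\mathbf{w}(T^1)$. Substituting, the cross tensor collapses to a square:
$\bigl(\mathbf{w}(T^1)\otimes \mathbf{w}(T^2)\bigr)^*=-\bigl(\mathbf{w}(T^1)\otimes \mathbf{w}(T^1)\bigr)^*=-\,\mathbf{w}(T^1)\otimes \mathbf{w}(T^1)$, the final equality because $w\otimes w$ is already symmetric for any vector $w$. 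The minus sign then absorbs the leading minus, delivering \eqref{finaldiff} verbatim.

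For the last assertion, I would note that $I_1(\mathbb{B})>0$ (by irreducibility) and that each weight $\mu_{{}_{T^1}}\mu_{{}_{T^2}}\geq 0$ is a product of non-negative transition rates, so all coefficients in \eqref{finaldiff} are non-negative. Each summand $\mathbf{w}(T^1)\otimes \mathbf{w}(T^1)$ is a rank-one positive semidefinite matrix, since $\xi^{\top}\bigl(\mathbf{w}(T^1)\otimes \mathbf{w}(T^1)\bigr)\xi=\bigl(\mathbf{w}(T^1)\cdot\xi\bigr)^2\geq 0$ for every $\xi\in\R^d$; a non-negative combination of such matrices is non-negative definite, so $\mathbb{D}$ is non-negative definite.

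There is essentially no obstacle: the entire graph-theoretic weight has been packaged into the $k=2$ case of the all-minors matrix tree theorem, and the single genuinely new step is recognizing that the zero-drift normalization turns the cross term $\mathbf{w}(T^1)\otimes \mathbf{w}(T^2)$ into a perfect square. The one point I would state explicitly, to be safe, is that each forest in $\mathcal{F}_2$ covers all $n$ vertices, which is exactly what guarantees $\mathbf{w}(T^1)$ and $\mathbf{w}(T^2)$ are negatives of one another.
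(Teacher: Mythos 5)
Your proposal is correct and takes essentially the same route as the paper: the text preceding Theorem \ref{thm:finaldiff} proves it exactly this way, applying the $k=2$ case of Theorem \ref{thm:allminors} to \eqref{diffmatrix}, interchanging the two sums, and then using Proposition \ref{prop:symdrift} (zero drift $\iff$ $\sum_{i}\mathbf{v}^i=\mathbf{0}$, so $\mathbf{w}(T^2)=-\mathbf{w}(T^1)$) to collapse the cross tensor $\bigl(\mathbf{w}(T^1)\otimes\mathbf{w}(T^2)\bigr)^*$ into $-\,\mathbf{w}(T^1)\otimes\mathbf{w}(T^1)$, absorbing the sign. Your closing argument for non-negative definiteness (non-negative weights times rank-one matrices $\mathbf{w}\otimes\mathbf{w}$) is precisely the reasoning the paper leaves implicit.
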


To get acquainted with formula \eqref{finaldiff}, let us consider some of the previous Examples.

\begin{example}\label{ex:fourside2} \normalfont
For what concerns the system of Example \ref{ex:fourside}, the set $\mathcal{F}_2$ is composed
by six elements, obtained by removing two arcs of the corresponding graph (see Figure 5).
\begin{figure}[hbt] 
\begin{tikzpicture}[scale=1.6, auto, swap]
    \foreach \pos/\name in {{(1,0)/1}, {(0,1)/2}, {(-1,0)/3},{(0,-1)/4}}
        \node[vertex] (\name) at \pos {$\name$};
    \foreach \source/ \dest /\weight in {2/3/\mu_{23},3/4/\mu_{34}}
        \path[edge] (\source) -- node[weight] {$\weight$} (\dest);
 \end{tikzpicture}
\hskip1cm
\begin{tikzpicture}[scale=1.6, auto, swap]
    \foreach \pos/\name in {{(1,0)/1}, {(0,1)/2}, {(-1,0)/3},{(0,-1)/4}}
        \node[vertex] (\name) at \pos {$\name$};
    \foreach \source/ \dest /\weight in {1/4/\mu_{14}, 3/4/\mu_{34}}
        \path[edge] (\source) -- node[weight] {$\weight$} (\dest);
 \end{tikzpicture}
\hskip1cm
\begin{tikzpicture}[scale=1.6, auto, swap]
    \foreach \pos/\name in {{(1,0)/1}, {(0,1)/2}, {(-1,0)/3},{(0,-1)/4}}
        \node[vertex] (\name) at \pos {$\name$};
    \foreach \source/ \dest /\weight in {1/2/\mu_{12},4/1/\mu_{14}}
        \path[edge] (\source) -- node[weight] {$\weight$} (\dest);
 \end{tikzpicture}
\vskip.5cm
\begin{tikzpicture}[scale=1.6, auto, swap]
    \foreach \pos/\name in {{(1,0)/1}, {(0,1)/2}, {(-1,0)/3},{(0,-1)/4}}
        \node[vertex] (\name) at \pos {$\name$};
    \foreach \source/ \dest /\weight in {1/2/\mu_{12}, 2/3/\mu_{23}}
        \path[edge] (\source) -- node[weight] {$\weight$} (\dest);
 \end{tikzpicture}
\hskip1cm
\begin{tikzpicture}[scale=1.6, auto, swap]
    \foreach \pos/\name in {{(1,0)/1}, {(0,1)/2}, {(-1,0)/3},{(0,-1)/4}}
        \node[vertex] (\name) at \pos {$\name$};
    \foreach \source/ \dest /\weight in {1/2/\mu_{12}, 3/4/\mu_{34}}
        \path[edge] (\source) -- node[weight] {$\weight$} (\dest);
 \end{tikzpicture}
\hskip1cm
\begin{tikzpicture}[scale=1.6, auto, swap]
    \foreach \pos/\name in {{(1,0)/1}, {(0,1)/2}, {(-1,0)/3},{(0,-1)/4}}
        \node[vertex] (\name) at \pos {$\name$};
    \foreach \source/ \dest /\weight in {2/3/\mu_{23},4/1/\mu_{14}}
        \path[edge] (\source) -- node[weight] {$\weight$} (\dest);
 \end{tikzpicture}
\caption{\footnotesize The six elements of  $\mathcal{F}_2$ for the graph of the Example \ref{ex:fourside}.}
\end{figure}
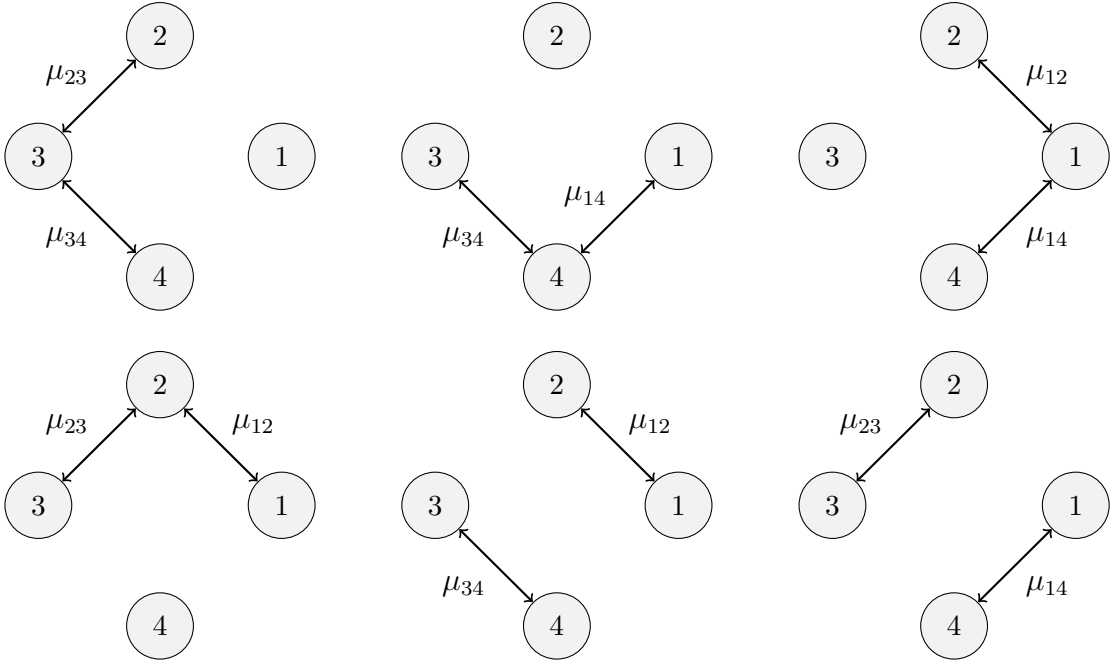
Each of the element $F=\{T^1,T^2\}$ in $\mathcal{F}_2$ gives the coefficient of an appropriate matrix of the form
$\mathbf{w}\otimes\mathbf{w}$  in the formula for the diffusion matrix $\mathbb{B}$.
The vector $\mathbf{w}$ is obtained by summing the verteces of the tree $T^1$, or of the tree $T^2$.
These two sum of verteces coincides because of the requirement $\mathbf{v}_{{}_{\textrm{drift}}}=0$.
The final expression of $\mathbb{D}$ has been given in Example \ref{ex:fourside}.
\end{example}

\begin{example}\label{ex:tour2} \normalfont
Example \ref{ex:tour} generalizes the previous one.
The family $\mathcal{F}_2$ is completely described by the trees $T^1$ containing a given vertex,
say $\mathbf{v}^1$.
Since such threes are composed by path of $k$ consecutive arcs, for $k=1,\dots,n-1$,
there are exactly $k$ of such threes containing the vertex.
Therefore, the family $\mathcal{F}_2$ is composed by $\frac{n(n-1)}{2}$ elements.
The diffusion matrix $\mathbb{D}$ is the sum of weighted matrix $\mathbf{w}\otimes\mathbf{w}$
where $\mathbf{w}$ is the sum of the verteces in the tree $T^1$ and the weights is the product
of the arcs in $T^1$ and in $T^2$ (thus the product of all the coefficients $\mu_{ij}$ with the exception
of the one corresponding to the two arcs not in $T^1$ and $T^2$).
\end{example}

\begin{example}\label{ex:tree2} \normalfont
The situation for Example \ref{ex:tree} is particularly simple.
Indeed, the set $\mathcal{F}_2$ is composed by $n-1$ element each of which correspond
to the forest of a tree composed by a single vertex $\mathbf{v}^{i}$, $i\in\{2,\dots,n\}$
and the tree, denoted by $T(i)$ composed by the remaining part of the graph excluding
both the vertex $\mathbf{v}^{i}$ and the arc from $\mathbf{v}^{1}$ to $\mathbf{v}^{i}$.
Thus, the diffusion matrix takes the form
\begin{equation*}
	\mathbb{D}=\frac{2}{\mu_{12}\dots\mu_{1n}}\sum_{i=2}^{n}
			\mu_{{}_{T(i)}}\left(\mathbf{v}^i\otimes \mathbf{v}^i\right)
			=\sum_{i=2}^{n}\frac{2}{\mu_{1i}}\left(\mathbf{v}^i\otimes \mathbf{v}^i\right).
\end{equation*}
Apparently,  the formula for $\mathbb{D}$ does not depend on the choice of $\mathbf{v}^1$,
but it should be always kept in mind that formula \eqref{finaldiff} gives the diffusion matrix 
when the drift term $\mathbf{v}_{{}_{\textrm{drift}}}$ has been set to zero by applying a
change in the reference frame.
Thus, the speed $\mathbf{v}^1$ cannot be chosen independently on the other speeds.
\end{example}

To conclude the Section let us consider a specific case for which the expression of the
diffusion matrix simplifies further, involving only term of the form $\mathbf{v}^i\otimes\mathbf{v}^i$.

\begin{proposition}\label{prop:onlyvi}
Let $\mathbb{B}$ be a symmetric matrix and assume:\par
	{\bf i.} either $n=2k$ for some $k$ and $\mathbf{v}^{2\ell}=-\mathbf{v}^{2\ell-1}$
		for any $\ell\in\{1,\dots,k\}$;\par
	{\bf ii.} or $n=2k+1$ for some $k$, $\mathbf{v}^{2\ell}=-\mathbf{v}^{2\ell-1}$ for any $\ell\in\{1,\dots,k\}$
		and $\mathbf{v}^{2k+1}=0$.\\
If the transition matrix $\mathbb{B}$ is such that
\begin{equation}\label{ip2minor}
	\mathbb{B}(2i-1,j)=\mathbb{B}(2i,j)\qquad\textrm{for any }i=1,\dots,k,\quad j=1,\dots,n,
\end{equation}
then there holds
\begin{equation}\label{diffmatrix2}
	\mathbb{D}=-\frac{2}{I_1(\mathbb{B})}\sum_{i=1}^{m}
		\sum_{\mathcal{F}_2(2i-1,2i)}\mu_{{}_{T^1}}\mu_{{}_{T^2}}(\mathbf{v}^{2i}\otimes \mathbf{v}^{2i})
\end{equation}
\end{proposition}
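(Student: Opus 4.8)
The plan is to start from the representation \eqref{diffmatrix} of the diffusion matrix, which applies directly here: since the velocities come in antipodal pairs $\mathbf{v}^{2\ell}=-\mathbf{v}^{2\ell-1}$ (together with $\mathbf{v}^{2k+1}=0$ in the odd case), their sum vanishes, so $\mathbf{v}_{{}_{\textrm{drift}}}=0$ by Theorem \ref{thm:drift} and no change of frame is needed. Writing $\det\mathbb{B}(a,b)$ for the second order principal minors and using $(\mathbf{v}^a\otimes\mathbf{v}^b)^*=\tfrac12(\mathbf{v}^a\otimes\mathbf{v}^b+\mathbf{v}^b\otimes\mathbf{v}^a)$ together with the symmetry $\det\mathbb{B}(a,b)=\det\mathbb{B}(b,a)$, I would first recast \eqref{diffmatrix} as a sum over ordered pairs $a\neq b$. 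Then I would organize this double sum into $2\times2$ blocks indexed by the pairs $(i,j)$ with $i,j\in\{1,\dots,k\}$, each block collecting the four indices $a\in\{2i-1,2i\}$ and $b\in\{2j-1,2j\}$.

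The heart of the argument is the vanishing of every off-diagonal block $i\neq j$. Substituting $\mathbf{v}^{2i-1}=-\mathbf{v}^{2i}$ and $\mathbf{v}^{2j-1}=-\mathbf{v}^{2j}$, the four tensors in such a block are all equal to $\pm(\mathbf{v}^{2i}\otimes\mathbf{v}^{2j})^*$, with signs $+,-,-,+$ attached respectively to the coefficients $\det\mathbb{B}(2i-1,2j-1)$, $\det\mathbb{B}(2i-1,2j)$, $\det\mathbb{B}(2i,2j-1)$, $\det\mathbb{B}(2i,2j)$. Hypothesis \eqref{ip2minor} says precisely that replacing a first index $2i-1$ by $2i$ leaves the minor unchanged, whence $\det\mathbb{B}(2i-1,2j-1)=\det\mathbb{B}(2i,2j-1)$ and $\det\mathbb{B}(2i-1,2j)=\det\mathbb{B}(2i,2j)$; the $+,-,-,+$ pattern then telescopes to zero. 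This step is where the whole statement lives, and I expect the main difficulty to be purely bookkeeping, namely keeping the sign produced by the antipodal substitution synchronized with the signs coming from symmetrization and from the index exchange furnished by \eqref{ip2minor}.

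What remains are the diagonal blocks $i=j$, for which only the term $\det\mathbb{B}(2i-1,2i)\,(\mathbf{v}^{2i-1}\otimes\mathbf{v}^{2i})^*$ survives; using $\mathbf{v}^{2i-1}=-\mathbf{v}^{2i}$ this collapses to a multiple of $\mathbf{v}^{2i}\otimes\mathbf{v}^{2i}$, producing exactly the single-index tensors appearing in \eqref{diffmatrix2}. In the odd case the index $2k+1$ never contributes, since $\mathbf{v}^{2k+1}=0$ annihilates every tensor product in which it appears, so the outer sum runs only over $i=1,\dots,k$. Finally I would invoke Theorem \ref{thm:allminors} with $\mathcal{I}=\{2i-1,2i\}$ to rewrite each coefficient as $\det\mathbb{B}(2i-1,2i)=\sum_{\mathcal{F}_2(2i-1,2i)}\mu_{{}_{T^1}}\mu_{{}_{T^2}}$, turning the collapsed expression into the graph-theoretic form \eqref{diffmatrix2}. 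Since each such minor is a sum of products of non-negative weights, the resulting matrix is manifestly non-negative definite, consistently with Theorem \ref{thm:finaldiff}.
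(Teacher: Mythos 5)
Your proof is correct, and it reaches the conclusion by a somewhat different organization of the same underlying cancellation. The paper argues by induction on $k$: it isolates the pair $(1,2)$, uses $\mathbf{v}^1=-\mathbf{v}^2$ to rewrite the cross terms as $\sum_{j>2}\bigl(\mathbb{B}(1,j)-\mathbb{B}(2,j)\bigr)(\mathbf{v}^2\otimes\mathbf{v}^j)^*$, kills them by \eqref{ip2minor}, and invokes the inductive hypothesis on the leftover sum $\sum_{2<i<j}$. You perform the identical cancellation globally and non-inductively, partitioning the sum in \eqref{diffmatrix} into $2\times2$ pair-blocks in which the $+,-,-,+$ sign pattern produced by the antipodal substitution telescopes to zero via \eqref{ip2minor}; only the diagonal blocks survive, collapsing to $\det\mathbb{B}(2i-1,2i)\,(\mathbf{v}^{2i}\otimes\mathbf{v}^{2i})$, which Theorem \ref{thm:allminors} with $\mathcal{I}=\{2i-1,2i\}$ converts into the forest sum. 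Your bookkeeping is in fact slightly cleaner than the paper's: in the induction step the residual sum $\sum_{2<i<j}\mathbb{B}(i,j)(\mathbf{v}^i\otimes\mathbf{v}^j)^*$ still involves minors of the full $n\times n$ matrix, so the inductive hypothesis (a statement about a transition matrix of size $n-2$) does not literally apply, and what actually happens there is precisely your block cancellation repeated pair by pair; you also treat case {\bf ii.} explicitly (the paper dismisses it as ``similar''), correctly noting that $\mathbf{v}^{2k+1}=0$ annihilates every term containing the index $2k+1$. One point you should flag rather than pass over: your computation yields the coefficient $+\frac{2}{I_1(\mathbb{B})}$, whereas \eqref{diffmatrix2} as printed carries a minus sign (and an upper summation limit $m$ that should read $k$). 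Since the minors and forest weights are non-negative, the printed minus would make $\mathbb{D}$ negative semi-definite, contradicting Theorem \ref{thm:finaldiff} and the final display of the paper's own induction, which also ends with a plus; so the sign in \eqref{diffmatrix2} is a typo and your version is the correct one.
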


\begin{proof}
Let us deal with case {\bf i.}, the other one being similar.
The proof consists in showing that formula \eqref{diffmatrix}, under assumption \eqref{ip2minor},
can be rearranged as
\begin{equation*}
	\mathbb{D}=-\frac{2}{I_1(\mathbb{B})}
		\sum_{i=1}^{k} \mathbb{B}(2i-1,2i)\,(\mathbf{v}^{2i}\otimes \mathbf{v}^{2i}).
\end{equation*}
Let us proceed by induction.
For $k=1$, there holds
\begin{equation*}
	\frac{1}{2}\,I_1(\mathbb{B})\,\mathbb{D}=-\mathbb{B}(1,2)\,(\mathbf{v}^1\otimes \mathbf{v}^2)^*
		=\mathbb{B}(1,2)(\mathbf{v}^2\otimes \mathbf{v}^2)
\end{equation*}
Then, assuming that the thesis holds for $k-1$, we infer
\begin{equation*}\begin{aligned}
	&\frac12\,I_1(\mathbb{B})\,\mathbb{D}=-\sum_{j>1} \mathbb{B}(1,j)\,(\mathbf{v}^1\otimes \mathbf{v}^j)^*
		-\sum_{j>2} \mathbb{B}(2,j)\,(\mathbf{v}^2\otimes \mathbf{v}^j)^*
		-\sum_{2<i<j} \mathbb{B}(i,j)\,(\mathbf{v}^i\otimes \mathbf{v}^j)^*\\
	&\quad=-\mathbb{B}(1,2)\,(\mathbf{v}^1\otimes \mathbf{v}^2)^*
		+\sum_{j>2} \bigl(\mathbb{B}(1,j)-\mathbb{B}(2,j)\bigr)(\mathbf{v}^2\otimes \mathbf{v}^j)^*
		-\sum_{2<i<j} \mathbb{B}(i,j)\,(\mathbf{v}^i\otimes \mathbf{v}^j)^*\\
	&\quad=\mathbb{B}(1,2)\,(\mathbf{v}^2\otimes \mathbf{v}^2)^*
		+\sum_{i=2}^{k} \mathbb{B}(2i-1,2i)\,(\mathbf{v}^{2i}\otimes \mathbf{v}^{2i})
		=\sum_{i=1}^{k} \mathbb{B}(2i-1,2i)\,(\mathbf{v}^{2i}\otimes \mathbf{v}^{2i}),
\end{aligned}\end{equation*}
that gives the conclusion.
\end{proof}

In term of transitions, hypothesis \eqref{ip2minor} asserts that the probability to jump on/from 
velocity $\mathbf{v}^{2j}$ or $-\mathbf{v}^{2j}$ to a different given speed $\mathbf{v}^{\ell}$ is the same.
Such assumption is meaningful either when each couple of speeds $\mathbf{v}^{2j-1},\mathbf{v}^{2j}$
corresponds to a cartesian direction or also in the case of ``undirected'' motion in the sense that
the two directions on the same line have same probability of success after transition.
As an example, the latter situation is of interest in the modeling of undirected tissues as considered in
\cite{Hill06} in the modeling of mesenchymal motion.

\section{Asymptotically parabolic behavior}\label{sect:asym}

To corroborate the analysis of the previous Section, we now show that the representation of the diffusion
matrix $\mathbb{D}$ is indeed significant for the description of the large-time behavior of solutions
to \eqref{discrkin}--\eqref{initialdatum}.

As mentioned in the Introduction, the main result of this Section fits into a well-estabilished research
strand, whose main target is to quantify the large-time parabolic behavior of solutions of a class of
dissipative hyperbolic equations. 
Specifically, the main point of the analysis is to show that the distance of the hyperbolic solution 
and some solution to a corresponding parabolic problem, decayes to zero
faster than the decay of each separate term, showing that the dissipation mechanism is 
asymptotically of the same type. 
Being, usually, the parabolic behavior preferred for regularity reasons, it is usually stated
that the hyperbolic equation has an {\bf asymptotically parabolic nature}. 

To quote some contributions, without any intention of completeness, let us mention
the direction that explores the form of the Green function with ref.\cite{Zeng99},
motivated by nonequilibrium gasdynamics, and ref. \cite{BianHanoNata07}, where a general 
class of relaxation systems is explored in details;
the $L^\infty$-bound in ref. \cite{YangMila00} and the $L^p-L^q$ estimates
in reff. \cite{MarcNish03, Nish03} (and descendants) relative to the prototypical
case of the relation between heat and telegraph equation;
the analysis $L^2$-asymptotic expansions of the solutions for the heat and the damped wave
equation proposed in ref. \cite{Volk10}, which clearly shows how the diffusive behavior is the effect
of the cancellation of leading order terms.

Concerning the diffusive behavior of system \eqref{discrkin} as $t\to+\infty$,
we state and prove the following result.

\begin{theorem}\label{thm:asymptotic}
Assume the transition matrix $\mathbb{B}$ to be symmetric with strictly positive diagonal elements
and the diffusion matrix $\mathbb{D}$ to be positive definite.
Let $\mathbf{f}$ be the solution to \eqref{discrkin}--\eqref{initialdatum}, $u$ be the sum
of the components $f_i$, i.e. $u:=\mathbf{1}\cdot \mathbf{f}$, and $u_{{}_{\textrm{par}}}$
be the solution to
\begin{equation*}
	\frac{\partial w}{\partial t}
		=\textrm{\normalfont div}\left(\mathbb{D}\nabla_{\mathbf{x}} w\right)
	\qquad
	w(x,0)=\mathbf{1}\cdot \mathbf{f}_0.
\end{equation*} 
Then, if $\mathbf{f}_0\in [L^1\cap L^2(\R^d)]^n$, there holds
\begin{equation}\label{asymptotic}
	|u-u_{{}_{\textrm{par}}}|_{{}_{L^2}}(t)
		\leq C\,t^{-\frac{1}{4}d-\frac{1}{2}}|{\mathbf{f}}_0|_{{}_{L^1\cap L^2}}
\end{equation}
for some $C>0$ (independent of $t$ and ${\mathbf{f}}_0$).
\end{theorem}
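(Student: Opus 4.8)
The plan is to work entirely on the Fourier side, where the hyperbolic evolution diagonalizes into a family of $n\times n$ linear systems indexed by the frequency $\mathbf{k}$. For each fixed $\mathbf{k}$ the solution is $\hat{\mathbf{f}}(\mathbf{k},t)=e^{-t\,\mathbb{M}(\mathbf{k})}\hat{\mathbf{f}}_0(\mathbf{k})$, where $\mathbb{M}(\mathbf{k})=\mathbb{B}+i\,\diag(\mathbf{v}^i\cdot\mathbf{k})$, and the cumulative variable is $\hat u(\mathbf{k},t)=\mathbf{1}\cdot\hat{\mathbf{f}}(\mathbf{k},t)$. The parabolic comparison solution satisfies $\hat u_{{}_{\textrm{par}}}(\mathbf{k},t)=e^{-t\,\mathbf{k}^\top\mathbb{D}\,\mathbf{k}}\,(\mathbf{1}\cdot\hat{\mathbf{f}}_0(\mathbf{k}))$. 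By Plancherel, controlling $|u-u_{{}_{\textrm{par}}}|_{{}_{L^2}}$ reduces to an $L^2$ bound in $\mathbf{k}$ of the difference of these two scalar multipliers acting on the data. I would split the frequency domain into a low-frequency ball $|\mathbf{k}|\leq\delta$ and its complement, and estimate the two regions by different mechanisms.

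In the low-frequency region the eigenvalue analysis of Proposition \ref{prop:expansion00} is decisive. For $|\mathbf{k}|$ small there is a smooth branch $\lambda(\mathbf{k})$ of $\mathbb{M}(\mathbf{k})$ passing through the origin, with (after passing to the drift frame) $\nabla_{\mathbf{k}}\lambda(\mathbf{0})=\mathbf{0}$ and Hessian equal to $-2\mathbb{D}$; the remaining $n-1$ eigenvalues stay bounded away from the imaginary axis, with real parts bounded below by a positive constant. First I would show that the contribution of the cumulative variable $\mathbf{1}\cdot\hat{\mathbf{f}}$ from the non-decaying slow branch dominates: the spectral projection onto the slow eigenspace applied to $\mathbf{1}$ captures almost all of $\hat u$, while the fast modes contribute a term decaying like $e^{-ct}$ uniformly. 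The slow branch gives $e^{t\,\lambda(\mathbf{k})}$, and the Taylor expansion yields $\lambda(\mathbf{k})=-\mathbf{k}^\top\mathbb{D}\,\mathbf{k}+O(|\mathbf{k}|^3)$. Thus on $|\mathbf{k}|\leq\delta$ the difference obeys
\begin{equation*}
	\bigl|e^{t\,\lambda(\mathbf{k})}-e^{-t\,\mathbf{k}^\top\mathbb{D}\,\mathbf{k}}\bigr|
		\leq C\,t\,|\mathbf{k}|^3\,e^{-c\,t|\mathbf{k}|^2},
\end{equation*}
using the positive definiteness of $\mathbb{D}$ to control both exponentials and the mean value theorem on the exponent difference. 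Integrating $t^2|\mathbf{k}|^6 e^{-2c t|\mathbf{k}|^2}$ against $|\hat{\mathbf{f}}_0|^2\leq|\hat{\mathbf{f}}_0(\mathbf{0})|^2\lesssim|\mathbf{f}_0|_{{}_{L^1}}^2$ over $\R^d$, the scaling substitution $\mathbf{k}=\mathbf{y}/\sqrt{t}$ produces exactly the factor $t^{-\frac{1}{4}d-\frac{1}{2}}$, since the extra three powers of $|\mathbf{k}|$ against the Gaussian buy $t^{-3/2}$ while the $t$ prefactor and the Jacobian $t^{-d/2}$ combine correctly.

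In the high-frequency region $|\mathbf{k}|\geq\delta$ both multipliers must decay rapidly; here I would invoke the Shizuta--Kawashima structure established in Proposition \ref{prop:sk}, together with the hypothesis that $\mathbb{B}$ has strictly positive diagonal and $\mathbb{D}$ is positive definite, to produce a uniform spectral gap $\Real\,\lambda_j(\mathbf{k})\geq c_0>0$ for the hyperbolic multiplier, giving a bound $|\hat u|\leq C e^{-c_0 t}|\hat{\mathbf{f}}_0(\mathbf{k})|$ there; the parabolic multiplier is trivially bounded by $e^{-c\delta^2 t}$. Both contributions are then controlled in $L^2$ by $e^{-c't}|\mathbf{f}_0|_{{}_{L^2}}$, which decays faster than any algebraic rate and is absorbed into \eqref{asymptotic}. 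The main obstacle is the intermediate-frequency transition: I must verify that the slow branch $\lambda(\mathbf{k})$ remains well separated from the fast modes, and that its real part stays negative with a genuinely quadratic floor $\Real\,\lambda(\mathbf{k})\leq -c|\mathbf{k}|^2$ throughout $|\mathbf{k}|\leq\delta$, so that the cubic remainder in the exponent never overwhelms the Gaussian. This requires a careful uniform-in-$\mathbf{k}$ perturbation estimate controlling the projection onto the slow eigenspace and the remainder term, and it is precisely where the positive definiteness of $\mathbb{D}$ and the irreducibility of $\mathbb{B}$ enter in an essential way.
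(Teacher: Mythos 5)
Your proposal follows essentially the same route as the paper's proof: Plancherel reduction, a splitting of frequency space at $|\mathbf{k}|=\varepsilon_0$, the spectral decomposition $\exp\left\{-\bigl(\diag(\mathbf{v}^i\cdot\mathbf{k})+\mathbb{B}\bigr)t\right\}=e^{\lambda(\mathbf{k})t}\,\mathbb{P}(\mathbf{k})+O(e^{-\theta t})$ at low frequencies, and uniform exponential decay elsewhere; your Gaussian scaling computation for the multiplier difference (the bound $Ct|\mathbf{k}|^3e^{-ct|\mathbf{k}|^2}$, squared and integrated, giving $t^{-\frac12 d-1}$) is correct and lands on the stated exponent. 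Two sub-steps deserve comment. First, in the low-frequency region the paper's \emph{dominant} contribution is not the eigenvalue-difference term you display but the $O(\mathbf{k})$ correction to the spectral projector: writing $\mathbb{P}(\mathbf{k})=\frac1n(\mathbf{1}\otimes\mathbf{1})+O(\mathbf{k})$, the term $I_{12}$ in Lemma \ref{lemma:I1} is $O(1)\int e^{2\Real\lambda(\mathbf{k})t}|\mathbf{k}|^2|\hat{\mathbf{f}}_0|^2\,d\mathbf{k}\leq C\,t^{-\frac12 d-1}|\mathbf{f}_0|^2_{{}_{L^1}}$, of exactly the same order as your cubic remainder, and it is the reason the final estimate carries $|\mathbf{f}_0|_{{}_{L^1\cap L^2}}$ rather than a norm of $u_0$ alone. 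You do flag the needed ``uniform-in-$\mathbf{k}$ perturbation estimate'' on the projection at the end, so the idea is present, but your claim that the exponent-difference integral ``produces exactly'' the rate is incomplete: the projector error contributes at the same order and must be estimated, not merely separated off (it is harmless only because it happens to land on the same exponent). Second, at high frequencies you invoke Proposition \ref{prop:sk} and the Shizuta--Kawashima equivalence, whose hypotheses (irreducibility and the span condition \eqref{linspan}) are not literally those of Theorem \ref{thm:asymptotic}; the paper instead argues directly in Lemma \ref{lemma:I2}: symmetry of $\mathbb{B}$ excludes purely imaginary eigenvalues for $\mathbf{k}\neq 0$, continuity and compactness give a gap on annuli $\varepsilon_0\leq|\mathbf{k}|\leq M$, and the high-frequency expansion $\lambda(\mathbf{k})=-\mathbf{v}^i\cdot\mathbf{k}-\mu_{ii}+o(1)$ as $|\mathbf{k}|\to\infty$ closes the gap at infinity --- this last step is precisely where the strictly-positive-diagonal hypothesis of the theorem enters, so your route should either import the Proposition \ref{prop:sk} hypotheses explicitly or replicate this direct computation. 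Finally, a small slip: $|\hat{\mathbf{f}}_0(\mathbf{k})|\leq|\hat{\mathbf{f}}_0(\mathbf{0})|$ is false in general (it holds for nonnegative data only); the bound you need, and in effect use, is $\sup_{\mathbf{k}}|\hat{\mathbf{f}}_0(\mathbf{k})|\leq|\mathbf{f}_0|_{{}_{L^1}}$.
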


Hypotheses on the matrices $\mathbb{B}$ and $\mathbb{D}$ are satisfied when
the conditions required in Proposition \ref{prop:sk} hold, namely
\begin{equation*}
	\mathbb{B}\quad\textrm{irreducible}
	\qquad\textrm{and}\qquad
	\linspan\{\mathbf{v}^i-\mathbf{v}^j\,:\, i,j=1,\dots,n\}=\R^d.
\end{equation*}
Indeed, if the matrix $\mathbb{B}$ is irreducible (or, equivalent, if the graph associated
to the system \eqref{discrkin} is connected), the diagonal elements are strictly positive.
Moreover, since in \cite{ShizKawa85} it is proved that the property in Proposition \ref{prop:sk}
is equivalent to the bound
\begin{equation*}
	\Real\lambda\leq -\frac{c_0|\mathbf{k}|^2}{1+|\mathbf{k}|^2}
\end{equation*}
for some $c_0>0$ and for any $(\lambda,\mathbf{k})$ satisfying the dispersion relation,
the matrix $\mathbb{D}$ is forced to be positive definite, whenever the differences
$\mathbf{v}^i-\mathbf{v}^j$ generates all of $\R^d$.

To prove Theorem \ref{thm:asymptotic}, we applying Fourier transform to \eqref{discrkin}.
Denoting by $\hat{u}$ the Fourier transform of function $u$, we obtain a system of ordinary
differential equations for the frequency variables $\hat f=\hat f(\mathbf{k},t)$
\begin{equation}\label{discrkinF}
  	\frac{\partial \hat f_i}{\partial t}+(\mathbf{v}^i\cdot\mathbf{k})\hat f_i
  		+\sum_{j\neq i}\bigl(\mu_{ij}\,\hat f_i-\mu_{ji}\,\hat f_j\bigr)=0, \qquad i=1,\dots,n,
\end{equation}
with initial conditions
\begin{equation}\label{initialF}
 \hat f_i(\mathbf{k},0)=\hat f_{0,i}(\mathbf{k})\qquad\qquad \mathbf{k}\in\mathbb{R}^d,\quad i=1,\dots,n
\end{equation}
Denoting by $\hat{\mathbf{f}}$ and $\hat{\mathbf{f}}_0$ the vectors of components $\hat f_i$ and $\hat f_{0,i}$,
respectively, the solution to \eqref{discrkinF}--\eqref{initialF} is given by
\begin{equation*}
	 \hat{\mathbf{f}}(\mathbf{k},t)=\exp\left\{-\bigl(\diag(\mathbf{v}^i\cdot\mathbf{k})
	 	+\mathbb{B}\bigr)t\right\} \hat{\mathbf{f}}_0(\mathbf{k})
\end{equation*}
where the symbol $\mathbb{B}$ denotes the same matrix of Section \ref{sect:drift}.
Let us consider the functions
\begin{equation*}
	 \hat{u}(\mathbf{k},t)=\mathbf{1}\cdot \exp\left\{-\bigl(\diag(\mathbf{v}^i\cdot\mathbf{k})
	 	+\mathbb{B}\bigr)t\right\} \hat{\mathbf{f}}_0(\mathbf{k})
	\quad\textrm{and}\quad
	\hat{u}_{{}_{\textrm{par}}}(\mathbf{k},t)
	 	=e^{(\mathbf{k}\cdot\mathbb{D}\mathbf{k})t}\hat u_0(\mathbf{k})
\end{equation*}
where $\hat u_0=\mathbf{1}\cdot\hat{\mathbf{f}}_0$.
We are interested in estimating the $L^2$-norm of the difference $\hat{u}-\hat{u}_{{}_{\textrm{par}}}$.

Let $\varepsilon_0>0$ to be chosen. Then, there holds
\begin{equation}\label{diffest}
	|\hat{u}-\hat{u}_{{}_{\textrm{par}}}|_{{}_{L^2}}^2(t)\leq I_1(t)+I_2(t)
\end{equation}
where
\begin{equation*}
	\begin{aligned}
	I_1(t)&:=\int_{|\mathbf{k}|<\varepsilon_0} \Bigl|\mathbf{1}\cdot \exp\left\{-\bigl(\diag(\mathbf{v}^i\cdot\mathbf{k})
	 	+\mathbb{B}\bigr)t\right\} \hat{\mathbf{f}}_0(\mathbf{k})
		-e^{(\mathbf{k}\cdot\mathbb{D}\mathbf{k})t}(\mathbf{1}\cdot\hat{\mathbf{f}}_0)(\mathbf{k})\Bigr|^2\,d\mathbf{k}\\
	I_2(t)&:=\int_{|\mathbf{k}|\geq \varepsilon_0}\left\{|\hat{u}|^2
			+|\hat{u}_{{}_{\textrm{par}}}|^2\right\}(\mathbf{k},t)\,d\mathbf{k}
	\end{aligned}
\end{equation*}
The dispersion relation of \eqref{discrkin} in the limit $\mathbf k\to 0$ has been explored in Section \ref{sect:drift}.
In particular, in this regime, it is possible to identify the branch of solution of $p(\lambda,\mathbf{k})=0$
passing through the origin $(0,\mathbf{0})$ and to describe its  behavior by means of a scalar-valued
function $\lambda=\lambda(\mathbf{k})$ for which the following second-order expansion holds
\begin{equation*}
	\lambda(\mathbf{k})=\mathbf{k}\cdot\mathbb{D}\mathbf{k}+o(|\mathbf{k}|^2)
		\qquad\textrm{as}\quad \mathbf{k}\to 0.
\end{equation*}
Taking advantage of this relation, we are able to state a result concerning the bound of $I_1$.

\begin{lemma}\label{lemma:I1}
Let the vectors $\mathbf{v}^i$ be such that $\sum_{i} \mathbf{v}^i=0$
and let the matrix $\mathbb{D}$ be positive definite.
If $\mathbf{f}_0\in L^1\cap L^2(\R^d)$, then there exist $\varepsilon_0, C, c>0$
such that
\begin{equation}\label{estimateI1}
	I_1(t)\leq C\bigl(t^{-\frac{1}{2}d-1}|{\mathbf{f}}_0|^2_{{}_{L^1}}
		+e^{-ct}|{\mathbf{f}}_0|_{{}_{L^2}}^2\bigr).
\end{equation}
for any $t>0$.
\end{lemma}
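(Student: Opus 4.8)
The plan is to separate the Fourier evolution into a slow ``hydrodynamic'' mode and a fast, exponentially damped complement, and then to compare the slow mode with its quadratic (parabolic) approximation.

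First I would exploit the structure of $\mathbb{B}$: being symmetric, diagonally dominant and with non-positive off-diagonal entries, it is a weighted graph Laplacian, hence non-negative definite, and by irreducibility its eigenvalue $0$ is simple with eigenspace $\linspan\{\mathbf{1}\}$, all other eigenvalues being $\geq\gamma>0$. Writing $\mathbb{M}(\mathbf{k}):=\diag(\mathbf{v}^i\cdot\mathbf{k})+\mathbb{B}$, the perturbation $\diag(\mathbf{v}^i\cdot\mathbf{k})$ is entire in $\mathbf{k}$, so for $|\mathbf{k}|<\varepsilon_0$ small the simple eigenvalue near $0$ persists and is governed by the branch $\lambda=\lambda(\mathbf{k})$ of Proposition \ref{prop:expansion00} (whose smoothness is exactly what that proposition provides), with an associated spectral projection $P(\mathbf{k})$ depending analytically on $\mathbf{k}$, while the rest of the spectrum of $\mathbb{M}(\mathbf{k})$ stays in $\{\Real z\geq\gamma/2\}$. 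This yields the splitting of the propagator
\begin{equation*}
	\exp\{-\mathbb{M}(\mathbf{k})t\}=e^{\lambda(\mathbf{k})t}P(\mathbf{k})+\mathbb{S}(\mathbf{k},t),
	\qquad\|\mathbb{S}(\mathbf{k},t)\|\leq C\,e^{-\gamma t/2},
\end{equation*}
uniform for $|\mathbf{k}|<\varepsilon_0$, together with the bound $\Real\lambda(\mathbf{k})\leq -c|\mathbf{k}|^2$ on the same range, obtained from the Shizuta--Kawashima estimate of \cite{ShizKawa85} recalled above.

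Next I would rewrite the integrand of $I_1$. Since $\mathbb{B}$ is symmetric, $P(\mathbf{0})=\tfrac1n\mathbf{1}\mathbf{1}^{\!\top}$ is the orthogonal projection onto $\linspan\{\mathbf{1}\}$, whence $\mathbf{1}^{\!\top}P(\mathbf{0})=\mathbf{1}^{\!\top}$. Setting $q(\mathbf{k}):=\mathbf{k}\cdot\mathbb{D}\mathbf{k}$, so that $\hat u_{{}_{\textrm{par}}}=e^{q(\mathbf{k})t}(\mathbf{1}\cdot\hat{\mathbf{f}}_0)$, and adding and subtracting $e^{\lambda(\mathbf{k})t}(\mathbf{1}\cdot\hat{\mathbf{f}}_0)$, I obtain the three-term decomposition
\begin{equation*}
	\hat u-\hat u_{{}_{\textrm{par}}}
	=\underbrace{\mathbf{1}\cdot\mathbb{S}\,\hat{\mathbf{f}}_0}_{(\mathrm A)}
	+\underbrace{e^{\lambda(\mathbf{k})t}\,\mathbf{1}\cdot\bigl(P(\mathbf{k})-P(\mathbf{0})\bigr)\hat{\mathbf{f}}_0}_{(\mathrm B)}
	+\underbrace{\bigl(e^{\lambda(\mathbf{k})t}-e^{q(\mathbf{k})t}\bigr)(\mathbf{1}\cdot\hat{\mathbf{f}}_0)}_{(\mathrm C)},
\end{equation*}
in which $(\mathrm A)$ carries the fast modes, $(\mathrm B)$ the defect between the true slow eigenvector and the constant vector $\mathbf{1}$, and $(\mathrm C)$ the error from replacing the exact slow eigenvalue by its quadratic part.

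Then I would estimate the three contributions over $|\mathbf{k}|<\varepsilon_0$. For $(\mathrm A)$, $\|\mathbb{S}\|\leq Ce^{-\gamma t/2}$ and Plancherel give $\int|(\mathrm A)|^2\,d\mathbf{k}\leq Ce^{-\gamma t}|\mathbf{f}_0|_{{}_{L^2}}^2$, the second term of \eqref{estimateI1}. For $(\mathrm B)$, the Lipschitz bound $\|P(\mathbf{k})-P(\mathbf{0})\|\leq C|\mathbf{k}|$, the pointwise bound $|\hat{\mathbf{f}}_0|\leq|\mathbf{f}_0|_{{}_{L^1}}$, and $|e^{\lambda(\mathbf{k})t}|\leq e^{-c|\mathbf{k}|^2t}$ give $|(\mathrm B)|\leq C|\mathbf{k}|\,e^{-c|\mathbf{k}|^2t}|\mathbf{f}_0|_{{}_{L^1}}$, and the Gaussian integral $\int_{\R^d}|\mathbf{k}|^2e^{-2c|\mathbf{k}|^2t}\,d\mathbf{k}=O(t^{-d/2-1})$ yields $\int|(\mathrm B)|^2\,d\mathbf{k}\leq Ct^{-d/2-1}|\mathbf{f}_0|_{{}_{L^1}}^2$. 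For $(\mathrm C)$, the second-order expansion of Proposition \ref{prop:expansion00} together with the hypothesis $\sum_i\mathbf{v}^i=0$ (which makes $\nabla_{\mathbf{k}}\lambda(\mathbf{0})=0$) gives $\lambda(\mathbf{k})-q(\mathbf{k})=O(|\mathbf{k}|^3)$, while positive-definiteness of $\mathbb{D}$ gives $\Real q(\mathbf{k})\leq -c'|\mathbf{k}|^2$; hence the elementary bound $|e^{a}-e^{b}|\leq|a-b|\,e^{\max\{\Real a,\Real b\}}$ produces $|(\mathrm C)|\leq C\,t|\mathbf{k}|^3e^{-c''|\mathbf{k}|^2t}|\mathbf{f}_0|_{{}_{L^1}}$, and the Gaussian integral $t^2\int_{\R^d}|\mathbf{k}|^6e^{-2c''|\mathbf{k}|^2t}\,d\mathbf{k}=O(t^{-d/2-1})$ again gives $\int|(\mathrm C)|^2\,d\mathbf{k}\leq Ct^{-d/2-1}|\mathbf{f}_0|_{{}_{L^1}}^2$. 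Summing the three bounds produces \eqref{estimateI1}.

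The hard part is the perturbative step: one must certify that the kernel eigenvalue of $\mathbb{B}$ stays simple and uniformly isolated under the perturbation $\diag(\mathbf{v}^i\cdot\mathbf{k})$ for all $|\mathbf{k}|<\varepsilon_0$, so that $\lambda(\mathbf{k})$ and $P(\mathbf{k})$ are smooth with the quantitative controls $\|P(\mathbf{k})-P(\mathbf{0})\|=O(|\mathbf{k}|)$ and $\lambda(\mathbf{k})-q(\mathbf{k})=O(|\mathbf{k}|^3)$, and that the complementary spectrum remains uniformly in $\{\Real z\geq\gamma/2\}$; once this spectral gap and smooth splitting are in hand, everything else reduces to routine Gaussian integrals. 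It is worth noting where each hypothesis enters: irreducibility of $\mathbb{B}$ secures the simple kernel and the gap $\gamma$, the condition $\sum_i\mathbf{v}^i=0$ removes the first-order drift term so that the leading behaviour is genuinely diffusive, and positive-definiteness of $\mathbb{D}$ upgrades the mere non-negativity of Theorem \ref{thm:finaldiff} to the coercive estimate $\Real q(\mathbf{k})\leq -c'|\mathbf{k}|^2$ responsible for the full decay rate.
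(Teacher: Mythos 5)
Your proposal is correct and follows essentially the same route as the paper: the same spectral splitting of the propagator into the slow branch $e^{\lambda(\mathbf{k})t}P(\mathbf{k})$ plus a uniformly exponentially damped remainder, the same three-term decomposition (your $(\mathrm A)$, $(\mathrm B)$, $(\mathrm C)$ correspond exactly to the paper's $O(e^{-\theta t})$ remainder, its $I_{12}$ term coming from $\mathbb{P}(\mathbf{k})=\tfrac1n(\mathbf{1}\otimes\mathbf{1})+O(\mathbf{k})$, and its $I_{11}$ term coming from $e^{\lambda(\mathbf{k})t}-e^{(\mathbf{k}\cdot\mathbb{D}\mathbf{k})t}$), and the same Gaussian-integral bookkeeping yielding $t^{-\frac12 d-1}|\mathbf{f}_0|^2_{{}_{L^1}}$. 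If anything, your treatment of $(\mathrm C)$ is slightly more careful than the paper's: by using the analyticity-based bound $\lambda(\mathbf{k})-\mathbf{k}\cdot\mathbb{D}\mathbf{k}=O(|\mathbf{k}|^3)$ together with $|e^{a}-e^{b}|\leq|a-b|\,e^{\max\{\Real a,\Real b\}}$ you correctly keep track of the factor $t$, which the paper's $I_{11}$ (stated with $o(1)|\mathbf{k}|^4$ and no $t^2$) silently absorbs.
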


\begin{proof}
Denoting by $\mathbb{P}(\mathbf{k})$ the spectral projector relative to $\lambda=\lambda(\mathbf{k})$,
we deduce that for $\varepsilon_0$ sufficiently small the matrix there holds
\begin{equation*}
	\exp\left\{-\bigl(\diag(\mathbf{v}^i\cdot\mathbf{k})+\mathbb{B}\bigr)t\right\}
		=e^{\lambda(\mathbf{k})t}\mathbb{P}(\mathbf{k})+O(e^{-\theta t})
\end{equation*}
for some $\theta>0$ uniform with respect to $\mathbf{k}$ such that $|\mathbf{k}|\leq \varepsilon_0$.
Then, the term $I_1$ can be estimated by
\begin{equation*}
		I_1(t)\leq \int_{|\mathbf{k}|<\varepsilon_0}
			\Bigl|\Bigl(e^{\lambda(\mathbf{k})t}\mathbf{1}\,\mathbb{P}(\mathbf{k})
				-e^{(\mathbf{k}\cdot\mathbb{D}\mathbf{k})t}\mathbf{1}\Bigr)
				\cdot\hat{\mathbf{f}}_0(\mathbf{k})\Bigr|^2\,d\mathbf{k}
			+O(e^{-2\theta t}) \int_{|\mathbf{k}|<\varepsilon_0}
				\bigl|\hat{\mathbf{f}}_0(\mathbf{k})\bigr|^2\,d\mathbf{k}.
\end{equation*}
The projection $\mathbb{P}(\mathbf{k})$ has the form $r(\mathbf{k})\otimes \ell(\mathbf{k})$ where $\ell$ and $r$
are, respectively, left and right eigenvectors of $\diag(\mathbf{v}^i\cdot\mathbf{k})+\mathbb{B}$ relative
to the eigenvalue $\lambda(\mathbf{k})$, normalized so that the condition $\ell(\mathbf{k})\cdot r(\mathbf{k})=1$
holds.
Since the sum of columns and rows of $\mathbb{B}$ is zero, by assumption, there holds
\begin{equation*}
	r(\mathbf{0})=\ell(\mathbf{0})=\frac{1}{\sqrt{n}}\mathbf{1}.
\end{equation*}
In particular, the zero-th order expansion for $\mathbb{P}(\mathbf{k})$ is
\begin{equation*}
	\mathbb{P}(\mathbf{k})=\frac{1}{n}(\mathbf{1}\otimes \mathbf{1})+O(\mathbf{k})
	\qquad\textrm{as}\quad \mathbf{k}\to \mathbf{0}.
\end{equation*}
Therefore, the term $I_1$ is bounded by
\begin{equation*}
		I_1(t)\leq I_{11}(t)+I_{12}(t)+ O(e^{-2\theta t})|\hat{\mathbf{f}}_0|_{{}_{L^2}}^2,
\end{equation*}
with
\begin{equation*}
	\begin{aligned}
	I_{11}(t)&:=o(1)\int_{|\mathbf{k}|<\varepsilon_0}e^{2(\mathbf{k}\cdot\mathbb{D}\mathbf{k})t}
			|\mathbf{k}|^4\bigl|\hat{u}_0(\mathbf{k})\bigr|^2\,d\mathbf{k},\\
	I_{12}(t)&:=O(1)\int_{|\mathbf{k}|<\varepsilon_0}
				e^{2\Real\lambda(\mathbf{k})t} |\mathbf{k}|^2|\hat{\mathbf{f}}_0(\mathbf{k})|^2\,d\mathbf{k}
	\end{aligned}
\end{equation*}
Since, for $v\in L^1(\R^d)$, there holds
\begin{equation*}
	\begin{aligned}
	\int_{|\mathbf{k}|<\varepsilon_0} e^{-2\theta|\mathbf{k}|^2 t}
		|\mathbf{k}|^{2\ell}|\hat{v}(\mathbf{k})|^2\,d\mathbf{k}
	&\leq |v|^2_{{}_{L^1}}\int_{|\mathbf{k}|<\varepsilon_0}
		e^{-2\theta|\mathbf{k}|^2 t}|\mathbf{k}|^{2\ell}\,d\mathbf{k}\\
	&=|v|^2_{{}_{L^1}}\,t^{-\frac{1}{2}d-\ell}\int_{|\mathbf{y}|<\varepsilon_0\sqrt{\theta t}}
		e^{-2|\mathbf{y}|^2}|\mathbf{y}|^{2\ell}\,d\mathbf{k}
	\leq C|v|^2_{{}_{L^1}}\,t^{-\frac{1}{2}d-\ell}.
	\end{aligned}
\end{equation*}
Collecting, we end up with 
\begin{equation*}
		I_1(t)\leq C\bigl(\,t^{-\frac{1}{2}d-2}|u_0|^2_{{}_{L^1}}
		+C\,t^{-\frac{1}{2}d-1}|{\mathbf{f}}_0|^2_{{}_{L^1}}
		+e^{-2\theta t}|{\mathbf{f}}_0|_{{}_{L^2}}^2\bigr),
\end{equation*}
which gives \eqref{estimateI1}.
\end{proof}

Concerning the term $I_2$, relative to value of $\mathbf{k}$ at positive distance from the origin $\mathbf{0}$,
the following estimate holds true.

\begin{lemma}\label{lemma:I2}
Let $\mathbf{v}^i$ be such that $\sum_{i} \mathbf{v}^i=0$ and let the matrix $\mathbb{B}$ be 
such that $\mu_{ii}>0$ for any $i=1,\dots,n$.
If $\mathbf{f}_0\in L^2(\R^d)$, then for any $\varepsilon_0>0$ there exist $C, c>0$ such that
\begin{equation}\label{estimateI2}
	I_2(t)\leq C\,e^{-ct}|{\mathbf{f}}_0|_{{}_{L^2}}^2,
\end{equation}
for any $t>0$.
\end{lemma}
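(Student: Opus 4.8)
The plan is to control the two contributions to $I_2$ separately, writing $I_2(t)\le I_2^{\mathrm{hyp}}(t)+I_2^{\mathrm{par}}(t)$ with $I_2^{\mathrm{hyp}}:=\int_{|\mathbf{k}|\ge\varepsilon_0}|\hat u|^2\,d\mathbf{k}$ and $I_2^{\mathrm{par}}:=\int_{|\mathbf{k}|\ge\varepsilon_0}|\hat u_{{}_{\textrm{par}}}|^2\,d\mathbf{k}$. The parabolic piece is immediate: since $\mathbb{D}$ is positive definite (a standing hypothesis of Theorem \ref{thm:asymptotic}) one has $\mathbf{k}\cdot\mathbb{D}\mathbf{k}\ge\delta|\mathbf{k}|^2\ge\delta\varepsilon_0^2$ for $|\mathbf{k}|\ge\varepsilon_0$, whence $|\hat u_{{}_{\textrm{par}}}(\mathbf{k},t)|\le e^{-\delta\varepsilon_0^2 t}|\hat u_0(\mathbf{k})|$ and, by Plancherel together with $|\hat u_0|=|\mathbf{1}\cdot\hat{\mathbf{f}}_0|\le\sqrt{n}\,|\hat{\mathbf{f}}_0|$,
\begin{equation*}
	I_2^{\mathrm{par}}(t)\le e^{-2\delta\varepsilon_0^2 t}\,|u_0|^2_{{}_{L^2}}\le n\,e^{-2\delta\varepsilon_0^2 t}\,|\mathbf{f}_0|^2_{{}_{L^2}}.
\end{equation*}

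For the hyperbolic piece the point of departure is the energy identity obtained by pairing \eqref{discrkinF} with $\hat{\mathbf{f}}$ in $\C^n$. The transport contribution $\diag(\mathbf{v}^i\cdot\mathbf{k})$ is (purely imaginary, hence) skew-Hermitian, so it disappears from the real part and one is left exactly with the quadratic form already met in \eqref{Iforms},
\begin{equation*}
	\frac{d}{dt}|\hat{\mathbf{f}}(\mathbf{k},t)|^2=-2\,\langle\mathbb{B}\hat{\mathbf{f}},\hat{\mathbf{f}}\rangle
		=-\sum_{i,j}\mu_{ij}\,|\hat f_i-\hat f_j|^2\le 0.
\end{equation*}
This by itself yields only monotonicity, because $\mathbb{B}$ is merely non-negative definite and degenerates on $\ker\mathbb{B}$; the hypothesis $\mu_{ii}>0$ ensures that no single component is left completely undamped, but the constant mode along $\mathbf{1}$ is still annihilated by $\mathbb{B}$. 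Damping this residual mode is the crux of the argument.

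The resolution is the Shizuta--Kawashima mechanism, already encoded in Proposition \ref{prop:sk}: under the ambient irreducibility and span conditions of Theorem \ref{thm:asymptotic}, the coupling between $\diag(\mathbf{v}^i\cdot\mathbf{k})$ and $\mathbb{B}$ transfers energy out of the kernel mode into the dissipated directions, and by \cite{ShizKawa85} this is equivalent to the spectral bound $\Real\lambda(\mathbf{k})\le-c_0|\mathbf{k}|^2/(1+|\mathbf{k}|^2)$ for every branch of the dispersion relation. Concretely I would upgrade the bare identity above to a coercive one via Kawashima's compensating-function device: choosing a bounded skew-Hermitian corrector $K(\mathbf{k})$ and forming $E(\mathbf{k},t):=|\hat{\mathbf{f}}|^2+\kappa\,\langle iK(\mathbf{k})\hat{\mathbf{f}},\hat{\mathbf{f}}\rangle$, which for $\kappa$ small is equivalent to $|\hat{\mathbf{f}}|^2$, one obtains $\tfrac{d}{dt}E\le-c_1\,\tfrac{|\mathbf{k}|^2}{1+|\mathbf{k}|^2}\,E$ and hence
\begin{equation*}
	|\hat{\mathbf{f}}(\mathbf{k},t)|^2\le C\,\exp\Bigl(-c_1\,\tfrac{|\mathbf{k}|^2}{1+|\mathbf{k}|^2}\,t\Bigr)\,|\hat{\mathbf{f}}_0(\mathbf{k})|^2.
\end{equation*}
Since $|\mathbf{k}|\ge\varepsilon_0$ forces $|\mathbf{k}|^2/(1+|\mathbf{k}|^2)\ge\varepsilon_0^2/(1+\varepsilon_0^2)=:\gamma_0>0$, the exponent is uniformly bounded by $-c_1\gamma_0$, and using $|\hat u|\le\sqrt{n}\,|\hat{\mathbf{f}}|$ together with Plancherel,
\begin{equation*}
	I_2^{\mathrm{hyp}}(t)\le n\,C\,e^{-c_1\gamma_0 t}\int_{\R^d}|\hat{\mathbf{f}}_0|^2\,d\mathbf{k}
		=n\,C\,e^{-c_1\gamma_0 t}\,|\mathbf{f}_0|^2_{{}_{L^2}}.
\end{equation*}
Adding the two bounds and setting $c:=\min\{2\delta\varepsilon_0^2,\,c_1\gamma_0\}$ yields \eqref{estimateI2}. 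The main obstacle is precisely the degeneracy of $\mathbb{B}$ on its kernel: the naive energy estimate cannot see the constant mode, so the weight of the proof rests on converting the genuine coupling of Proposition \ref{prop:sk} into an effective, uniform-in-$\mathbf{k}$ exponential rate — equivalently, on taming the non-normality of $\diag(\mathbf{v}^i\cdot\mathbf{k})+\mathbb{B}$, which the compensating functional $E$ is designed to circumvent.
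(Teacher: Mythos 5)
Your proof is correct in context, but the hyperbolic half follows a genuinely different route from the paper's. The paper proves the uniform spectral bound \eqref{stablelambda} directly on the dispersion relation, in three frequency regimes: near $\mathbf{k}=\mathbf{0}$ via the expansion of Section \ref{sect:drift}; on compact sets $\varepsilon_0\le|\mathbf{k}|\le M$ by showing (using the symmetry of $\mathbb{B}$ and $\sum_i\mathbf{v}^i=0$) that no purely imaginary eigenvalue can occur at a nonzero real frequency, plus compactness; and as $|\mathbf{k}|\to\infty$ by a singular-perturbation expansion $\lambda(\mathbf{k})=-\mathbf{v}^i\cdot\mathbf{k}-\mu_{ii}+o(1)$, which is precisely where the hypothesis $\mu_{ii}>0$ earns its keep --- your gloss that it ``ensures no component is left undamped'' misses its actual role, which is to keep the spectral gap open in the high-frequency limit. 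Your compensating-function energy estimate instead delivers the pointwise semigroup bound $|\hat{\mathbf{f}}(\mathbf{k},t)|^2\le C\exp\bigl(-c_1|\mathbf{k}|^2t/(1+|\mathbf{k}|^2)\bigr)|\hat{\mathbf{f}}_0(\mathbf{k})|^2$ in one stroke, and this buys something real: the matrices $\diag(\mathbf{v}^i\cdot\mathbf{k})+\mathbb{B}$ form a non-normal, unbounded family, so the paper's eigenvalue bound \eqref{stablelambda} does not by itself yield a $\mathbf{k}$-uniform bound on the matrix exponential --- a passage the paper leaves implicit and your functional $E$ handles cleanly. The price is that you import the existence of the corrector $K(\mathbf{k})$ wholesale from \cite{ShizKawa85} and you need the full Shizuta--Kawashima condition of Proposition \ref{prop:sk}; since Theorem \ref{thm:asymptotic} does not literally assume irreducibility or \eqref{linspan}, to be airtight you should observe that positive definiteness of $\mathbb{D}$ together with $\sum_i\mathbf{v}^i=0$ forces $\linspan\{\mathbf{v}^1,\dots,\mathbf{v}^n\}=\R^d$ (hence \eqref{linspan}), and that irreducibility is a standing assumption of Section \ref{sect:drift} --- the same background facts the paper's own intermediate-frequency step ($\ker\mathbb{B}=\linspan\{\mathbf{1}\}$, ruling out $\lambda=0$ at $\mathbf{k}\ne\mathbf{0}$) silently uses. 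The parabolic piece of your argument coincides with the paper's.
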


\begin{proof}
Since $\mathbb{D}$ is positive definite,
from the definition of $\hat{u}_{{}_{\textrm{par}}}$ it follows
\begin{equation*}
	\int_{|\mathbf{k}|\geq \varepsilon_0} |\hat{u}_{{}_{\textrm{par}}}|^2 \,d\mathbf{k}
		\leq \int_{|\mathbf{k}|\geq \varepsilon_0} e^{-2c_0|\mathbf{k}|^2 t}|\hat u_0(\mathbf{k})|^2\,d\mathbf{k}
		\leq  e^{-2c_0\varepsilon_0^2 t} |\hat u_0|^2_{{}_{L^2}}
		=e^{-2c_0\varepsilon_0^2 t} |\mathbf{1}\cdot \mathbf{f}_0|^2_{{}_{L^2}}.
\end{equation*}
for some $c_0>$.
Thus, estimate \eqref{estimateI2} is proved if we show that there exists $\theta>0$ such that
\begin{equation}\label{stablelambda}
	\Real\lambda\leq -\theta<0
\end{equation}
for any $\lambda$ such that
$\det(\lambda\,\mathbb{I}+\diag(\mathbf{v}^i\cdot\mathbf{k})+\mathbb{B})=0$
for some $|\mathbf{k}|>\varepsilon_0$.

The description of the dispersion relation given in Section \ref{sect:drift} guarantees that
property $\Real\lambda<0$ holds for $\mathbf{k}\neq 0$ and small.

Next, we show that the system does not support pure imaginary value of $\lambda$
corresponding to purely imaginary values of $\mathbf{k}$.
Let $F$ be such that
\begin{equation}\label{evalue}
	 (\lambda+\mathbf{v}^i\cdot \mathbf{k})F_i+\sum_{j=1}^{n}\mu_{ij}\,F_j=0
 	\qquad\qquad i=1,\dots,n.
\end{equation}
By multiplying for the complex conjugate $\bar F_i$ and summing with respect to $i$, we get
\begin{equation*}
 	\sum_{i=1}^n (\lambda+v_i\cdot \mathbf{k})|F_i|^2+\sum_{i, j=1}^{n}\mu_{ij}\,\bar F_i\,F_j=0
\end{equation*}
Since the matrix $\mathbb{B}$ is symmetric, the last term is real;
hence, for $\lambda,\mathbf{k}$ purely imaginary,
\begin{equation*}
	 \sum_{i=1}^n (\lambda+\mathbf{v}^i\cdot \mathbf{k})|F_i|^2=\sum_{i, j=1}^{n}\mu_{ij}\,\bar F_i\,F_j=0
\end{equation*}
Since $F\in \ker B$, then $F=C\mathbf{1}$ for some $C\in\R$ and thus
\begin{equation*}
 	\lambda = -\frac{1}{n} \sum_{i=1}^n \mathbf{v}^i \cdot \mathbf{k}=0
\end{equation*}
At the moment, \eqref{stablelambda} holds for $|\mathbf{k}|\in[\varepsilon_0,M]$
for any $0<\varepsilon_0<M$ for some $\theta=\theta(\varepsilon_0, M)$.
The last step of the proof concerns with the high frequency regime $\mathbf{k}\in i\R^d$
with $|\mathbf{k}|\to\infty$.

To this aim, let us consider the eigenvalue problem \eqref{evalue} with
$\mathbf{k}=\varepsilon^{-1}\mathbf{h}$, $|\mathbf{h}|=1$ and $\lambda=\varepsilon^{-1}\nu$,
that is
\begin{equation}\label{evalue2}
	 (\mu+\mathbf{v}^i\cdot \mathbf{h})F_i+\varepsilon\sum_{j=1}^{n}\mu_{ij}\,F_j=0
 	\qquad\qquad i=1,\dots,n
\end{equation}
in the limit $\varepsilon\to 0$.
Setting
\begin{equation*}
	\nu=\nu_0+\varepsilon\,\nu_1+o(\varepsilon),\qquad
	F=F_0+\varepsilon\,F_1+o(\varepsilon)
\end{equation*}
plugging into \eqref{evalue2} and collecting the terms with same power of $\varepsilon$,
we get the relations
\begin{equation*}
	 (\nu_0+\mathbf{v}^i\cdot \mathbf{h})F_{0,i}=0
	 \qquad\textrm{and}\qquad
	  (\nu_0+\mathbf{v}^i\cdot \mathbf{h})F_{1,i}
	  +\nu_1 F_{0,i}+\sum_{j=1}^{n}\mu_{ij}\,F_{0,j}=0
\end{equation*}
Hence, we infer for the $0$-th order coefficients 
\begin{equation*}
	\nu_0=-\mathbf{v}^i\cdot \mathbf{h}
	\qquad\textrm{and}\qquad F_{0,i}=E_i
\end{equation*}
where $E_i$ denotes the $i$-th element of the canonical base of $\R^n$.
Plugging into the $1$-st order relation, we get the formula for the first coefficient
in the expansion of $\nu$,
\begin{equation*}
	\nu_1=-\mu_{ii}.
\end{equation*}
Coming back to the original variables $\lambda$ and $\mathbf{k}$, the asymptotic
expansions reads
\begin{equation*}
	\lambda(\mathbf{k})=-\mathbf{v}^i\cdot \mathbf{k}-\mu_{ii}+o(1)
	\qquad\textrm{as}\quad |\mathbf{k}|\to\infty.
\end{equation*}
Since the diagonal values $\mu_{ii}$ are assumed to be strictly positive, the bound
\eqref{stablelambda} can be prolunged to $m\to+\infty$, changing, if necessary, 
the value of the constant $\theta$.
\end{proof}

By means of the results of Lemmas \ref{lemma:I1}--\ref{lemma:I2}, the completion of
the proof of Theorem \ref{thm:asymptotic} is at hand.
Indeed, resuming from \eqref{diffest} and using \eqref{estimateI1}--\eqref{estimateI2},
we obtain
\begin{equation*}
	|\hat{u}-\hat{u}_{{}_{\textrm{par}}}|_{{}_{L^2}}^2(t)
		\leq C\bigl(t^{-\frac{1}{2}d-1}|{\mathbf{f}}_0|^2_{{}_{L^1}}
				+e^{-ct}|{\mathbf{f}}_0|_{{}_{L^2}}^2\bigr)
		\leq C\,t^{-\frac{1}{2}d-1}|{\mathbf{f}}_0|^2_{{}_{L^1\cap L^2}}
\end{equation*}
that gives \eqref{asymptotic} passing to the square roots and invoking Plancherel identity.

\end{document}